\newcommand\Defn[1]{\textbf{#1}}%
\newcommand\eps{\varepsilon}%
\renewcommand\emptyset{\varnothing}%
\newcommand\R{\mathbb{R}}%
\newcommand\inner[1]{\langle {#1} \rangle}%
\newcommand\defeq{\coloneqq}%
\newcommand\eqdef{\eqqcolon}%
\newcommand\dcup{\mathbin{\mathaccent\cdot\cup}}%
\DeclareMathOperator{\relint}{relint}%
\DeclareMathOperator{\interior}{int}%
\DeclareMathOperator{\aff}{aff}%
\DeclareMathOperator{\conv}{conv}%
\DeclareMathOperator{\vol}{vol}%
\DeclareFontFamily{U}{mathx}{\hyphenchar\font45}%
\DeclareFontShape{U}{mathx}{m}{n}{ <5> <6> <7> <8> <9> <10> <10.95>
  <12> <14.4> <17.28> <20.74> <24.88> mathx10 }{}%
\DeclareSymbolFont{mathx}{U}{mathx}{m}{n}%
\DeclareMathAccent{\widecheck}{0}{mathx}{"71}%
\newcommand{\citecomment}[2][]{\citen{#2}#1\citevar}
\newcommand{\citeone}[1]{\citecomment{#1}}
\newcommand{\citetwo}[2][]{\citecomment[,~#1]{#2}}
\newcommand{\citevar}{\@ifnextchar\bgroup{;~\citeone}{\@ifnextchar[{;~\citetwo}{]}}}
\newcommand{\citefirst}{\@ifnextchar\bgroup{\citeone}{\@ifnextchar[{\citetwo}{]}}}
\newcommand{\cites}{[\citefirst}
\newtheorem{thm}{Theorem}[section]%
\newtheorem{cor}[thm]{Corollary}%
\newtheorem{lem}[thm]{Lemma}%
\newtheorem{prop}[thm]{Proposition}%
\theoremstyle{definition}%
\newtheorem{example}[thm]{Example}%
\title{Angle sums of simplicial polytopes}%
\author{Sebastian Manecke}%
\address{Institut für Mathematik, Goethe-Universität Frankfurt,
  Germany}%
\email{manecke@math.uni-frankfurt.de}%
\keywords{Solid angle, cone valuations, relative simplicial complexes, h-vector, Dehn-Sommerville relations}
\subjclass[2020]{%
  52B45, %
  05E45, %
  52B11, %
  52B05%
}
\date{\today}%
\begin{document}
\begin{abstract}
    The interior angle vector ($\widehat{\alpha}$-vector) of a
    polytope is a metric analogue of the $f$-vector in which faces are
    weighted by their solid angle. For simplicial polytopes,
    Dehn-Sommerville-type relations on the $\widehat{\alpha}$-vector
    were introduced by Sommerville (1927) and H\"ohn (1953). Camenga
    (2006) defined the $\widehat{\gamma}$-vector, a linear
    transformation analogous to the $h$-vector and conjectured it to
    be non-negative. Using tools from geometric and algebraic
    combinatorics, we prove this conjecture and show that the
    $\widehat{\gamma}$-vector increases in the first half and is
    flawless. In contrast to the $h$-vector, we construct a
    six-dimensional polytope with non-unimodal
    $\widehat{\gamma}$-vector. More generally, all result remain valid
    when solid angles are replaced by simple and non-negative cone
    valuations.
\end{abstract}

\maketitle
\nocite{sommerville1927}

\newcommand\aInt{\widehat{\alpha}}%
\newcommand\aExt{\widecheck{\alpha}}%
\newcommand\gInt{\widehat{\gamma}}%
\newcommand\gExt{\widecheck{\gamma}}%
\newcommand\Cones{\mathcal{C}}%

\newcommand\Arr{\mathcal{H}}

\section{Introduction}\label{sec:intro}
For a simplicial $d$-polytope $P$, the entry $f_i(P)$ of the
$f$-vector $f(P) = (f_{-1}(P), f_0(P), f_1(P), \dots, f_{d-1}(P))$ is
the number of $i$-dimensional faces of $P$. The $f$-vector of a
simplicial polytope has been a subject of investigation culminating in
the $g$-theorem due to Billera-Lee \cite{BilleraLee}, Stanley
\cite{Stanley_gTheorem}, and McMullen \cite{McMullen-Simple}, which
completely determines the whole set of possible $f$-vectors. The
result is best expressed by a certain linear transformation of the
$f$-vector called the $h$-vector
$h(P) = (h_0(P), h_1(P), \dots, h_d(P))$, which can be concisely
defined as an identity of polynomials in a single variable $t$:
\[
    \sum_{i=0}^{d}f_{i-1}(P) \cdot (t-1)^{d-i} \ = \ \sum_{k=0}^{d}h_{k}(P)
    \cdot t^{d-k}\,.
\]
Note that one can go back and forth between $f$- and $h$-vector, as
one defines the other. In terms of the $h$-vector, the following
linear relations and inequalities hold:
\begin{thm}[$h$-vectors of simplicial polytopes]\label{thm:hvectors}
    The $h$-vector
    $(h_0, \dots, h_d)$ of a simplicial $d$-polytope satisfies:
    \begin{enumerate}
    \item Dehn-Sommerville relations:
        $h_i = h_{d-i}$ for $i = 0, \dots, d$.
    \item Non-negativity: $0 \leq h_i$ for $i = 0, \dots, d$
    \item Unimodality:
        $h_0\leq h_1 \leq \dots \leq h_m \geq h_{m+1} \geq
        \dots \geq h_d$ for $m \defeq \big\lfloor\frac{d}{2}\big\rfloor$,
    \end{enumerate}
\end{thm}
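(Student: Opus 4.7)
The plan is to establish the three parts in sequence, moving from elementary combinatorics for (1) and (2) to substantially deeper algebraic input for (3).

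For the Dehn-Sommerville relations, I would exploit that $\partial P$ is a simplicial $(d-1)$-sphere, so every link $\mathrm{lk}_{\partial P}(\sigma)$ is again a sphere and satisfies a reduced Euler relation. Feeding these link identities into the rewritten definition $h_P(t) = (1-t)^d \sum_i f_{i-1}(P)\bigl(\tfrac{t}{1-t}\bigr)^i$, or equivalently passing to the face-pairing induced by reversing a shelling, yields the palindromy $h_i = h_{d-i}$ after a direct manipulation.

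For non-negativity, I would invoke the Bruggesser-Mani theorem: the boundary of every polytope admits a shelling $F_1, \dots, F_N$. For each facet $F_j$ define its restriction $R(F_j) \defeq \{\, v \in F_j : F_j \setminus \{v\} \subset F_1 \cup \dots \cup F_{j-1}\,\}$. A standard counting argument then identifies $h_k(P)$ with the number of facets $F_j$ satisfying $|R(F_j)| = k$, which is manifestly a non-negative integer.

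For unimodality, the deep $g$-theorem direction, I would translate to commutative algebra via the Stanley-Reisner ring $\C[\partial P]$. Modding out by a generic linear system of parameters $\theta_1, \dots, \theta_d$ gives an Artinian graded algebra whose Hilbert function is exactly $(h_0, \dots, h_d)$. Multiplication by a generic linear form $\omega$ provides degree-raising maps, and injectivity of these maps in degrees $i < d/2$ immediately yields $h_i \leq h_{i+1}$; the Dehn-Sommerville symmetry from (1) then supplies the decreasing half. The main obstacle is precisely this injectivity, namely the hard Lefschetz property for the simplicial projective toric variety attached to a rational polytope combinatorially equivalent to $P$, proved by Stanley via intersection cohomology of toric varieties and independently by McMullen via his polytope algebra; one must additionally handle non-rational $P$ by a perturbation and limiting argument.
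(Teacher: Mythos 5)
Your outline is correct, but note that the paper does not prove Theorem~\ref{thm:hvectors} at all: it is stated as classical background and attributed to the literature (Dehn--Sommerville, Bruggesser--Mani, and the $g$-theorem of Billera--Lee, Stanley, and McMullen), so there is no internal proof to compare against. That said, your three ingredients are exactly the standard ones, and the latter two are precisely the engines the paper re-deploys for its \emph{relative} analogues: the shelling/restriction-set count behind your part (2), together with the reversed-shelling pairing behind your part (1), reappears in Section~\ref{sec:gInt} (the line-shelling corollary and Lemma~\ref{lem:h_i_to_h_d-i}) to handle $h(D(R,P))$; and the hard Lefschetz input behind your part (3) is exactly Theorem~\ref{thm:lefschetz_exists}, which the paper feeds into the commutative diagrams of Section~\ref{sec:stanley_reisner} to get monotonicity and flawlessness of $h(M[\Psi]/\Theta)$. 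Two small points of care if you were to write this out in full: the identification of $h_k$ with $\#\{j : |R(F_j)|=k\}$ requires checking that the count is independent of the shelling (or simply deducing it from the $f$-to-$h$ transformation applied to the telescoping union), and the reduction of the non-rational case to the rational one should be justified by observing that a sufficiently small perturbation of the vertices of a \emph{simplicial} polytope preserves its combinatorial type, so a rational realization always exists.
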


\newcommand{\nuInt}{\widehat{\nu}}%
In this article, we focus on the interior angle-vector
$\nuInt(P) = (\nuInt_0(P), \dots, \nuInt_{d-1}(P))$ of a simplicial
polytope. The $\nuInt$-vector is a semi-discrete analogue of the
$f$-vector and can be defined by the sum of solid angle measures of
faces of fixed dimension.

Let $P \subseteq \R^d$ be a $d$-polytope. For a face $F$ of $P$,
define $\nuInt(F, P)$ to be the (solid) angle of $F$ at $P$:
\begin{equation}\label{eq:nu_def}
    \nuInt(F, P) \ \defeq \ \lim_{\eps \to 0} \frac{\vol_d(P \cap
      B_{\eps}(q))}{\vol_d(B_{\eps}(q))}\,,
\end{equation}
where $\vol_d$ is the $d$-dimensional volume and $B_{\eps}(q)$ is the
ball of radius $\eps$ around a point $q$ in the relative interior of
$F$. This expression is independent of the choice of $q$ and only
depends on $F$. Furthermore it is compatible with the intuitive
planar notion of angle in Euclidean geometry, but with a normalization
such that a full circle has angle $1$. Now the $i$-th entry of the
interior angle vector is just defined as the sum of the interior
angles at all $i$-faces:
\[
    \nuInt_i(P) \ \defeq \ \sum_{\substack{F \subset P\\\dim F = i}} \nuInt(F, P)\,.
\]

In \cite{BMS}, the notion of interior angle vector was generalized to
simple and normalized cone valuations $\alpha$, called \Defn{cone
  angles}, see \cite{McMullenAngleSumRelations} for a related
notion. A cone valuation is a map $\alpha : \Cones^d \to \R$, subject
to certain relations and we will give a precise definition of cone
valuations and cone angles in given in Section~\ref{sec:prelim}. The
notation $\aInt_i(P)$ will be used to denote the entries of the
interior angle vector of $P$ with respect to $\alpha$.

Cone valuations were used in \cite{BMS}, to determine the linear
equations satisfied by the $\aInt$-vector of all polytopes for any
cone angle $\alpha$. In this note, we focus on simplicial polytopes
and prove that similar linear inequalities as in
Theorem~\ref{thm:hvectors} are true for their $\aInt$-vectors --
again, for any cone angle $\alpha$. As before, the expressions in
terms of the $\aInt$-vector will be more complicated, but using the
same linear transformation, we derive the $\gInt$-vector from the
$\aInt$-vector, as introduced by Camenga \cite{Camenga2006}. Being
dependent on both $\alpha$ and $P$, we will denote the $\gInt$-vector
as $\gInt(\alpha, P)$, but will omit the $\alpha$ when it is clear
from the context. The \Defn{$\gInt$-vector}
$\gInt(P) = \gInt(\alpha, P)$ of a simplicial $d$-polytope
$P \subseteq \R^d$ is the tuple
$(\gInt_0(P), \gInt_1(P), \dots, \gInt_d(P))$ such that the following
identity of polynomials holds:
\[
    \sum_{k=0}^{d}\gInt_{k}(P) \cdot t^{d-k} \ \defeq \
    \sum_{i=0}^{d}\aInt_{i-1}(P) \cdot (t-1)^{d-i}\,,
\]
where we set $\aInt_{-1} = 0$. It was shown for $\alpha = \nu$ by Höhn~\cite{hohn}, that
over the set of all simplicial polytopes, the linear relations
satisfied by all $\aInt$-vectors form a
$\lfloor\frac{d}{2}\rfloor$-dimensional linear subspace. By
introducing the $\gInt$-vector, Camenga rewrote this equalities
into the following short equations reminiscent of the Dehn-Sommerville
equations:
\begin{thm}[Höhn~\cite{hohn}, Camenga~\cite{Camenga2006}] Let $P$ be a
    simplicial polytope and $\gInt(P) = \gInt(\nu, P)$ be the
    $\gInt$-vector with respect to the standard cone angle $\nu$. Then
    for $i = 0, \dots, d$:
    \[
        \gInt_i(P) + \gInt_{d-i}(P) \ = \ h_i(P)\,.
    \]
\end{thm}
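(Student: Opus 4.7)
The plan is to translate the claimed symmetry into a polynomial identity, reduce it via binomial expansion to a family of Sommerville--H\"ohn angle-sum relations, and prove each of the latter by applying the Brianchon--Gram identity to the transverse tangent cone at each face.

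Set $G(t) \defeq \sum_{k=0}^d \gInt_k(P)\,t^{d-k}$ and $H(t) \defeq \sum_{k=0}^d h_k(P)\,t^{d-k}$. Since the coefficient of $t^{d-i}$ in $t^d G(1/t)$ is $\gInt_{d-i}(P)$, the identity $\gInt_i + \gInt_{d-i} = h_i$ is equivalent to the polynomial identity $G(t) + t^d G(1/t) = H(t)$. Substituting the defining identities for $G$ and $H$, using $t^d(1/t - 1)^{d-i} = (-1)^{d-i}t^i(t-1)^{d-i}$, and expanding $t^i = ((t-1)+1)^i$ binomially, comparison of the coefficients of $(t-1)^{d-m}$ reduces the claim to the linear relations
\[
   \aInt_{m-1}(P) + \sum_{i=m}^{d}(-1)^{d-i}\binom{i}{m}\aInt_{i-1}(P) \;=\; f_{m-1}(P), \qquad m = 0, 1, \ldots, d,
\]
with the conventions $\aInt_{-1} = 0$ and $f_{-1} = 1$; these are the Sommerville--H\"ohn angle-sum relations for simplicial polytopes.

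To prove each such relation, I would swap the order of summation: since $P$ is simplicial, each $(i-1)$-face $\tau$ is an $(i-1)$-simplex containing exactly $\binom{i}{m}$ subfaces of dimension $m-1$. Grouping by $(m-1)$-faces $\sigma$ reduces the relation to the face-local identity
\[
   \aInt(\sigma, P) + \sum_{\sigma \subseteq \tau \subsetneq P}(-1)^{d-1-\dim\tau}\aInt(\tau, P) \;=\; 1
\]
for every nonempty face $\sigma$ of $P$. Decomposing the tangent cone as $T_\sigma P = \lin(\sigma) + C_\sigma$ with $C_\sigma$ pointed of dimension $d' = d - \dim\sigma$, the faces $\tau$ of $P$ with $\sigma \subseteq \tau$ correspond bijectively to the faces $\eta$ of $C_\sigma$, with $\aInt(\tau, P) = \nu(\eta, C_\sigma)$, $\aInt(\sigma, P) = \nu(C_\sigma)$, and $\aInt(P, P) = 1$ at the top face $\eta = C_\sigma$. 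The Brianchon--Gram identity for pointed cones, $\sum_\eta (-1)^{\dim\eta}\nu(\eta, C_\sigma) = (-1)^{d'}\nu(C_\sigma)$, then yields the face-local identity after isolating the contribution of $\eta = C_\sigma$ (corresponding to $\tau = P$) and collecting signs.

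The main obstacle is the sign-and-parity bookkeeping: in particular, one should verify the Brianchon--Gram identity for pointed cones, which I would derive from the polytopal version by intersecting $C_\sigma$ with a transverse affine hyperplane to obtain a bounded polytope and then applying the standard Brianchon--Gram relation. All remaining steps are formal: polynomial manipulation in the first paragraph, and the double-counting of $(m-1)$-subfaces in simplicial higher-dimensional faces in the second.
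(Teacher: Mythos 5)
Your proposal is correct in outline, but it takes a genuinely different route from the paper. The paper never proves this cited theorem in isolation: it deduces it from the more general Proposition~\ref{prop:gInt_ds_non_negative}, $\gInt_i(P) + \gInt_{d-i}(-P) = h_i(P)$ for an arbitrary cone angle $\alpha$, which is proved by decomposing each tangent cone into regions of the arrangement $\Arr(P)$, writing $\gInt_k(P) = \sum_{R}\alpha(R)\,h_k(D(R,P))$, identifying $D(R,-P)\cong B(R,P)$ by reflection, and invoking the Dehn--Sommerville relation $h_i(B(R,P)) = h_{d-i}(\bar{B}(R,P))$ for these jointly shellable relative complexes (Lemma~\ref{lem:h_i_to_h_d-i}, via reverse line shellings); for $\alpha=\nu$ the polytope is $\nu$-symmetric and the stated form follows. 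You instead unwind Camenga's change of variables back to the classical Sommerville--H\"ohn angle-sum relations and prove those locally at each face via the Gram relation for tangent cones. I checked your polynomial reduction, the double count $\binom{i}{m}$ (which uses simpliciality exactly where it must), and the sign bookkeeping in the face-local identity; they are consistent. The paper's route buys the general statement for non-even cone angles (where the $-P$ is unavoidable) together with the non-negativity and the shelling structure reused later; your route buys independence from shellability and Cohen--Macaulayness and makes the link to the original Sommerville--H\"ohn form explicit.

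Two points need attention. First, the case $m=0$ (equivalently $\sigma=\emptyset$) does not fit your cone picture, since $T_\emptyset P=\{0\}$; there the face-local identity is exactly the Gram--Euler relation for the polytope $P$ itself and must be quoted separately. Second, your derivation of the Gram relation for the pointed cone $C_\sigma$ ``by intersecting with a transverse affine hyperplane'' does not work as stated: a transverse section changes the solid angles at the faces, so the polytopal Gram relation for the section is not the relation you need. Either cite the cone version directly (it is classical) or derive it from the Brianchon--Gram indicator identity $\sum_{\eta}(-1)^{\dim\eta}\mathbf{1}_{T_\eta C} = (-1)^{\dim C}\mathbf{1}_{\relint(-C)}$ by applying $\nu$. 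Note that for a general cone angle this produces $\alpha(-C_\sigma)$ on the right-hand side, which is precisely the source of the $-P$ in the paper's general statement and is harmless in your argument only because $\nu$ is even.
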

Furthermore, she conjectured that the entries of the $\gInt$-vector
are non-negative. In this paper we prove this conjecture and show the
following:
\begin{thm}\label{thm:main}
    Let $\alpha$ be a non-negative cone angle, $P$ be a simplicial
    $d$-polytope and $\gInt(P) = \gInt(\alpha, P)$. Then the
    $\gInt$-vector $\gInt(P)$ satisfies:
    \begin{enumerate}
    \item Dehn-Sommerville equations: $\gInt_i(P) + \gInt_{d-i}(-P) = h_i(P)$
        for $i = 0, \dots, d$.
    \item Non-negativity: $0 \leq \gInt_i(P)$ for $i = 0, \dots, d$.
    \item Non-decreasing in the first half:
        $0 = \gInt_0(P) \leq \gInt_1(P) \dots \leq \gInt_m(P)$ for
        $m \defeq \big\lceil \frac{d}{2} \big\rceil$.
    \item Flawless: $\gInt_i(P) \leq \gInt_{d-i}(P)$ for
        $0 \leq i \leq \lfloor \tfrac{d}{2} \rfloor$.
    \end{enumerate}
\end{thm}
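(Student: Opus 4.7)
For the Dehn-Sommerville identity~(1), I would start from the linear relations satisfied by $\aInt(P)$ and $\aInt(-P)$ established in \cite{BMS}. Those relations express each combination $\aInt_i(P) + \aInt_i(-P)$ as an explicit linear combination of the $f_j(P)$, reflecting the cone-valuation identity relating the angle of a cone and its negative. Substituting into the defining identity of $\gInt$ and using the classical $f$-$h$ transform on the right-hand side, the equations should telescope into the desired identity, generalizing the Höhn-Camenga equation from $\alpha = \nu$ to any cone angle $\alpha$.

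For non-negativity~(2) together with the inequalities~(3) and~(4), I plan to fix a generic linear functional $\ell$ and use the induced line shelling $F_1, F_2, \dots, F_{f_{d-1}}$ of $\partial P$. For each facet $F_j$ I expect to attach a cone-valuation weight equal to $\alpha$ evaluated on the half-open simplicial tangent cone cut out of $P$ by the shelling order, recording also the index of the ``restriction vertex'' in the manner of the usual combinatorial proof that $h_k(\partial P)$ is non-negative. Standard theory of relative simplicial complexes should then package these weighted restrictions into a relative $h$-vector of a pair $(\partial P, \Gamma_\ell)$, which I want to identify with $\gInt(P)$. Non-negativity then follows at once from $\alpha \geq 0$. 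The first-half monotonicity~(3) would come from an explicit weight-preserving matching of facets whose shelling restriction has size $k-1$ into those of restriction size $k$, which is obstruction-free while $k \leq \lceil d/2 \rceil$; flawlessness~(4) follows from a parallel longer-range matching, modeled on the classical argument for flawlessness of $f$-vectors of Cohen-Macaulay complexes.

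The main obstacle will be constructing the relative complex $(\partial P, \Gamma_\ell)$ and verifying that its cone-valuation-weighted $h$-vector equals $\gInt(P)$ independently of the generic choice of $\ell$. Since $\alpha$ is only assumed simple, non-negative, and translation-invariant, no special structure of $\alpha$ can be exploited, and the identification must proceed purely from the abstract cone-valuation axioms. Aligning the half-open decomposition of each tangent cone with the combinatorial shelling restriction sets, so that the resulting sum is manifestly the $h$-vector entry of a shellable relative complex, will be the most delicate part of the argument; once this identification is in place, the four claims of the theorem should all fall out from standard enumerative and shelling-theoretic machinery applied to $(\partial P, \Gamma_\ell)$.
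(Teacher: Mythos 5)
There are two genuine gaps here. First, for the Dehn--Sommerville part~(1): your premise that the relations of \cite{BMS} express $\aInt_i(P)+\aInt_i(-P)$ as a linear combination of the $f_j(P)$ is false for a general cone angle. The sum $\aInt(P)+\aInt(-P)$ is only a \emph{convex combination} of the vectors $f(\partial P)-f(\pi(R,P))$ over the regions $R$ of the arrangement of translated facet hyperplanes, so it depends on the shadow boundaries and on how $\alpha$ distributes mass among the regions, not just on $f(P)$ (the triangle in Example~\ref{ex:triangle} is an explicit counterexample to the identity you would need). Moreover, even granting such relations, they pair equal indices, whereas the target identity pairs complementary indices $i$ and $d-i$; the passage between the two is exactly a reciprocity statement, namely the Dehn--Sommerville relations $h_i(D(R,P))=h_{d-i}(\bar D(R,P))$ for the relative balls of dark faces, which the paper proves by running a line shelling of $\partial P$ backwards. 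Without that ingredient the index reversal in $\gInt_i(P)+\gInt_{d-i}(-P)=h_i(P)$ does not come out of any substitution.

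Second, for parts~(2)--(4): your plan hinges on identifying $\gInt(P)$ with the weighted $h$-vector of a \emph{single} relative complex $(\partial P,\Gamma_\ell)$ attached to one generic shelling. But $\gInt_k(P)=\sum_{R\in\Reg(P)}\alpha(R)\,h_k(D(R,P))$ is a convex combination over \emph{all} regions $R$, each contributing a different relative complex with its own joint (line) shelling; there is no single $\ell$ whose restriction data reproduces this sum, and the independence from $\ell$ you flag as delicate is in fact the point where the construction breaks. More seriously, the ``obstruction-free matchings'' you invoke for first-half monotonicity and flawlessness would constitute a purely combinatorial proof of $h_i(\Psi)\le h_{i+1}(\Psi)$ for $i<d/2$ and $h_i(\Psi)\le h_{d-i}(\Psi)$; these are Lefschetz-type inequalities, not consequences of shellability (shellability of $D(R,P)$ yields only non-negativity). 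The paper has to pass to the relative Stanley--Reisner module $M[\Psi]=I_\Gamma/I_\Delta$, quotient by a common regular sequence, and use the existence of a Lefschetz element for $\Bbbk[\partial P]$ from Stanley's proof of the $g$-theorem to get injectivity of multiplication by $\omega$ and by $\omega^{d-2i}$ on $M[\Psi]/\Theta$. No elementary matching argument of the kind you describe is known even for $\Psi=\partial P$ itself, so this step cannot be waved through as ``standard shelling-theoretic machinery.''
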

Let us make a few remarks. First, the observant reader may have
noticed that our take on the Dehn-Sommerville equations has a minus
sign in the second summand. We say that $P$ is
\Defn{$\alpha$-symmetric}, if $\aInt_i(P) = \aInt_i(-P)$ for
$i = 0, \dots, d-1$, which in turn implies that
$\gInt_i(P) = \gInt_i(-P)$. While for $\nu$ every polytope $P$ is
$\alpha$-symmetric, for most pairs $\alpha$ and $P$ this symmetry does
not hold, and the minus sign in the statement is necessary in these
cases.

Second, while the $\gInt$-vector is non-decreasing in the first half,
it is not unimodal. We show that unimodality only holds in low
dimensions:
\begin{thm}\label{thm:when_unimodal}
    Let $\alpha$ be a non-negative cone angle, $P$ be a simplicial
    $d$-polytope and $\gInt(P) = \gInt(\alpha, P)$. If $d \leq 3$, the
    $\gInt$-vector $\gInt(P)$ is unimodal. It is unimodal for all
    $\alpha$-symmetric $d$-polytopes if $d \leq 5$.
\end{thm}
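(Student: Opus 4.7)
The goal is to show that no strict interior dip $\gInt_{j-1} > \gInt_j < \gInt_{j+1}$ occurs in the $\gInt$-vector, which is equivalent to unimodality. The structural properties of Theorem~\ref{thm:main} will rule out every possible dip location in the stated dimension range.

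For $d \le 3$, Theorem~\ref{thm:main}(3) forbids $\gInt_{j-1} > \gInt_j$ at every index $j \le m \defeq \lceil d/2 \rceil$, and since $d-1 \le m$ for $d \le 3$, every interior index $j \in \{1, \dots, d-1\}$ lies in this safe range. Hence unimodality is automatic, with no further assumption on $\alpha$ or $P$.

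For $d \in \{4,5\}$ and $P$ $\alpha$-symmetric, the identity $\gInt(-P) = \gInt(P)$ turns the Dehn--Sommerville equation of Theorem~\ref{thm:main}(1) into $\gInt_{d-i}(P) = h_i(P) - \gInt_i(P)$, and the flawless inequality $\gInt_i \le \gInt_{d-i}$ from Theorem~\ref{thm:main}(4) then sharpens into the halving bound
\[
    \gInt_i(P) \ \le \ \tfrac{1}{2}\,h_i(P) \qquad \text{for } 0 \le i \le d/2.
\]
Substituting gives the explicit shape $(0, \gInt_1, h_2/2, h_1 - \gInt_1, 1)$ when $d=4$ and $(0, \gInt_1, \gInt_2, h_2 - \gInt_2, h_1 - \gInt_1, 1)$ when $d=5$. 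The successive differences are non-negative at positions $0, 1, \dots, d-3$: this uses Theorem~\ref{thm:main}(3) and, for $d=5$, the halving bound $\gInt_2 \le h_2/2$ applied to $\Delta_2 = h_2 - 2\gInt_2$. Consequently a ``negative then positive'' sign pattern in the differences can only involve the last two, forcing any dip to sit at position $j = d-1$.

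Unfolding the dip condition $\gInt_{d-2} > \gInt_{d-1} < \gInt_d$ yields, in both dimensions, the inequality $\gInt_1 > h_1 - 1$. Combined with $\gInt_1 \le h_1/2$ this forces $h_1 = 1$, i.e., $P$ is the $d$-simplex with all $h_i = 1$. The second half of the dip condition then becomes $\gInt_1 > 1/2$ when $d=4$, contradicting the first-half bound $\gInt_1 \le h_2/2 = 1/2$; and $\gInt_2 < \gInt_1$ when $d=5$, contradicting first-half monotonicity $\gInt_1 \le \gInt_2$. The only subtle step is noticing that $\alpha$-symmetry upgrades the flawless inequality to the decisive halving bound $\gInt_i \le h_i/2$, which in turn isolates the simplex as the only potentially problematic polytope; everything else is a short inspection of the explicit five- or six-entry vector. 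The same case analysis already fails for $d=6$ because the dip at position $j = 4$ no longer involves $\gInt_1$, matching the non-unimodal six-dimensional example constructed elsewhere in the paper.
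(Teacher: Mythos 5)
Your argument is correct and follows essentially the same route as the paper: first-half monotonicity (Theorem~\ref{thm:main}(3)) handles all indices up to $d-2$, and the Dehn--Sommerville relation under $\alpha$-symmetry combined with flawlessness --- which the paper packages as the lemma that $\gInt_{d-1}(P)\ge\gInt_d(P)=1$ whenever $h_1(P)\ge 2$, and which is exactly the contrapositive of your ``a dip forces $h_1=1$'' step --- rules out the remaining drop-then-rise at the tail. The only differences are cosmetic: the paper dispatches the simplex case by its separate shelling computation showing $\gInt(\triangle_d)$ is non-decreasing, whereas you derive the needed contradiction for the simplex directly from the inequalities of Theorem~\ref{thm:main}; and your opening claim that non-unimodality is equivalent to a strict adjacent dip is false for general sequences (e.g.\ $(1,0,0,1)$), but it is valid in your setting precisely because the entries up to index $d-2$ are already known to be non-decreasing, so this does not affect the proof.
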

Furthermore we give counterexamples for dimensions $4$
and $6$, respectively. Finally, we investigate the $\gInt$-vector of the simplest
simplicial polytope:
\begin{thm}\label{thm:simplex}
    Let $\alpha$ be a non-negative cone angle, $\triangle_d$ be a
    $d$-simplex. Then its $\gInt$-vector
    $(\gInt_0, \gInt_1, \dots, \gInt_d) = \gInt(\alpha, \triangle_d)$
    is non-decreasing:
    \[
        0 \ = \ \gInt_0 \leq \gInt_1 \leq  \dots \leq  \gInt_d\,.
    \]
    The converse is almost true: If $\gInt(P) = \gInt(\alpha, P)$ is
    non-decreasing for a simplicial polytope $P$, then $P$ is either a
    simplex or a bipyramid. In the latter case,
    $\gInt(P) = (0, 1, 1, \dots, 1, 1)$.
\end{thm}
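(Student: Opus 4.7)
For part (1), I would use the fact that the $h$-vector of the $d$-simplex is $h(\triangle_d) = (1, 1, \ldots, 1)$, which follows from the binomial identity $\sum_{i} \binom{d+1}{i}(t-1)^{d-i} = 1 + t + \cdots + t^d$. Combined with Theorem~\ref{thm:main}(1), the Dehn-Sommerville relation becomes $\gInt_i(\triangle_d) = 1 - \gInt_{d-i}(-\triangle_d)$, so consecutive differences satisfy
\[
  \gInt_{k+1}(\triangle_d) - \gInt_k(\triangle_d) \ = \ \gInt_{d-k}(-\triangle_d) - \gInt_{d-k-1}(-\triangle_d).
\]
Theorem~\ref{thm:main}(3) applied to $\triangle_d$ handles $k \leq \lceil d/2 \rceil - 1$ directly (the left-hand side is then a first-half difference), while applied to $-\triangle_d$ (also simplicial) handles $k \geq \lfloor d/2 \rfloor$ (the right-hand side is then a first-half difference in $-\triangle_d$). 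These two ranges together cover every $0 \leq k \leq d - 1$, proving non-decreasing.

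For the converse in part (2), assume $\gInt(P)$ is non-decreasing. The value $\gInt_d(P) = h_d(P) - \gInt_0(-P) = 1$ is automatic, so non-decreasing immediately forces $\gInt_i(P) \leq 1$ for every $i$. Via Dehn-Sommerville, this confines each $\gInt_j(-P)$ to the tight window $[h_j(P) - 1,\ h_j(P)]$, and combining with the flawless and first-half non-decreasing properties of $\gInt(-P)$ from Theorem~\ref{thm:main}(3), (4) should force $f_0(P) = h_1(P) + d \leq d + 2$. The simplicial $d$-polytopes with $f_0 \leq d + 2$ are classically either the $d$-simplex or a direct sum $\triangle_a \oplus \triangle_{d-a}$ with $1 \leq a \leq d - 1$. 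An $f$-vector computation gives $h_2(\triangle_a \oplus \triangle_{d-a}) = 2$ exactly when $\min(a, d - a) = 1$ (the bipyramid) and $h_2 = 3$ otherwise; re-running the tight-window analysis with $h_2 = 3$ should produce a contradiction with the non-decreasing assumption, ruling out all non-bipyramid direct sums.

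In the remaining bipyramid case the $h$-vector is $(1, 2, 2, \ldots, 2, 1)$, so Dehn-Sommerville gives $\gInt_i(P) + \gInt_{d-i}(-P) = 2$ for $1 \leq i \leq d - 1$. Iterating inward from $\gInt_{d-1}(P) \leq \gInt_d(P) = 1$ via the tight window and the flawless/first-half constraints on $\gInt(-P)$ should pin down $\gInt_i(P) = 1$ for all $i \geq 1$, yielding the claimed vector $(0, 1, \ldots, 1)$. In my view the hardest step is extracting the bound $f_0 \leq d + 2$ together with the $h_2 = 3$ exclusion: the slack at each inequality is only a single unit, so the argument must simultaneously juggle Dehn-Sommerville, flawlessness, and first-half non-decreasing for both $P$ and $-P$, and likely requires specific geometric identities about the tangent cones at the vertices of a direct sum.
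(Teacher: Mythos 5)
Your argument for the first half is correct and takes a genuinely different route from the paper. The paper computes directly that every $D(R,\triangle_d)$ is isomorphic to $(\partial\triangle_d,\Gamma_k)$ for a set $\Gamma_k$ of $k$ facets, whose relative $h$-vector $(0,\dots,0,1,\dots,1)$ is non-decreasing, and then averages over regions via \eqref{eq:gInt_from_regions}. You instead exploit that $h(\triangle_d)$ is constant, so Theorem~\ref{thm:main}(1) converts second-half differences of $\gInt(\triangle_d)$ into first-half differences of $\gInt(-\triangle_d)$, and Theorem~\ref{thm:main}(3) applied to both $\triangle_d$ and $-\triangle_d$ covers all indices because $\lfloor d/2\rfloor\le\lceil d/2\rceil$. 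This is valid (Theorem~\ref{thm:main} is proved independently of the present statement) and it isolates exactly why the simplex is special: its $h$-vector is constant.

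The converse is where the gap lies. The steps you flag with ``should force'' cannot be completed, because the converse is false in the generality you are working in: the paper's actual proof (the precise version in Section~\ref{sec:unimodal}) additionally assumes that $P$ is $\alpha$-symmetric, i.e.\ $\gInt(P)=\gInt(-P)$. With that hypothesis the key bound is immediate, $h_k(P)=\gInt_k(P)+\gInt_{d-k}(-P)=\gInt_k(P)+\gInt_{d-k}(P)\le 2\gInt_d(P)=2$, after which the lower bound theorem identifies $P$ as a simplex or bipyramid. Without it, your window $\gInt_j(-P)\in[h_j(P)-1,\,h_j(P)]$ and the flawless/first-half inequalities for $\gInt(-P)$ only ever bound entries of $\gInt(-P)$ relative to $h(P)$ or to other entries of $\gInt(-P)$; they never cap $h_1(P)$. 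Indeed the paper's own $4$-dimensional example defeats the route you propose: for the projectively transformed cross-polytope $\Diamond_4$, putting all the $\alpha$-mass on the single region $-R$ (legitimate by Lemma~\ref{lem:alpha_construction}) yields $\gInt(\alpha,\Diamond_4)=(0,0,0,0,1)$, which is non-decreasing, yet $\Diamond_4$ has $d+4$ vertices and is neither a simplex nor a bipyramid over a simplex. So no combination of the stated inequalities can produce $f_0\le d+2$. (Your classification of simplicial $d$-polytopes with $d+2$ vertices as direct sums $\triangle_a\oplus\triangle_{d-a}$ is correct, and the paper instead invokes the lower bound theorem once $h=(1,2,\dots,2,1)$ is known; but the entry point $h_1(P)\le 2$, and likewise the final computation pinning down $\gInt(P)=(0,1,\dots,1)$, genuinely require $\alpha$-symmetry.)
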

We refer to Section~\ref{sec:unimodal} for a more precise discussion
of both cases. As a corollary, we will see that only one case occurs
when we consider the standard cone angle $\nu$, and we get the
following nice result:
\begin{cor}
    Let $P$ be a simplicial $d$-polytope and
    $\gInt(P) = \gInt(\nu, P)$. Then $\gInt(P)$ is non-decreasing if
    and only if $P$ is a $d$-simplex.
\end{cor}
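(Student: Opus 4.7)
The plan is to reduce to the dichotomy supplied by Theorem~\ref{thm:simplex} and then rule out the bipyramid branch when $\alpha = \nu$. The forward direction ($P$ a $d$-simplex implies $\gInt(\nu, P)$ is non-decreasing) is immediate from Theorem~\ref{thm:simplex}. For the converse, Theorem~\ref{thm:simplex} says that a simplicial $d$-polytope $P$ with non-decreasing $\gInt(\nu, P)$ is either a $d$-simplex (and we are done) or a bipyramid with $\gInt(\nu, P) = (0, 1, 1, \dots, 1, 1)$; the entire task is to show that this second case cannot occur.

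Suppose $P$ were such a bipyramid. Since $\nu$ is invariant under negation, one has $\gInt_i(\nu, P) = \gInt_i(\nu, -P)$, so the Dehn-Sommerville relations of Theorem~\ref{thm:main}(1) specialize to $\gInt_i(\nu, P) + \gInt_{d-i}(\nu, P) = h_i(P)$. Substituting the hypothesis $\gInt(\nu, P) = (0, 1, \dots, 1)$ forces $h(P) = (1, 2, 2, \dots, 2, 1)$, hence $f_0(P) = h_1(P) + d = d+2$, and consequently the base of the bipyramid must be the $(d-1)$-simplex $\Delta_{d-1}$.

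Next I would decompose $P = T_1 \cup T_2$ as the union of two $d$-simplices glued along the equatorial $\Delta_{d-1}$, each $T_i$ being the pyramid over $\Delta_{d-1}$ with one of the two apices. A short analysis of tangent cones yields the additivity
\[
    \aInt_0(\nu, P) \ = \ \aInt_0(\nu, T_1) + \aInt_0(\nu, T_2)\,,
\]
since each apex belongs to only one $T_i$ (and there the tangent cone of $T_i$ coincides with that of $P$), while at each equatorial vertex the tangent cones of $T_1$ and $T_2$ have disjoint interiors and together exhaust the tangent cone of $P$. As $\gInt_1(\nu, P) = \aInt_0(\nu, P)$ by the definition of $\gInt$, the hypothesis $\gInt_1(\nu, P) = 1$ would force $\aInt_0(\nu, T_1) + \aInt_0(\nu, T_2) = 1$.

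The main obstacle, and the place where the specific structure of $\nu$ enters, is the strict Gaddum-type inequality $\aInt_0(\nu, \Sigma) < \tfrac{1}{2}$ for every $d$-simplex $\Sigma$ with $d \geq 3$ (classically due to Gaddum in the tetrahedral case). Granted this bound, the additivity above gives $\aInt_0(\nu, P) < 1$, contradicting $\gInt_1(\nu, P) = 1$, so the bipyramid case is excluded and $P$ must be a $d$-simplex. The bound is tight precisely in dimension $2$, where any triangle has interior angle sum $\pi$ and hence $\aInt_0 = \tfrac{1}{2}$; correspondingly every quadrilateral (a bipyramid over an edge) satisfies $\gInt(\nu, \cdot) = (0, 1, 1)$, so the corollary is genuinely a statement from $d \geq 3$ onwards.
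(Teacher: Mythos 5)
Your argument is correct in substance but takes a genuinely different route from the paper, and it leans on one ingredient you assert rather than prove. The paper dispatches the bipyramid branch combinatorially, via the refined version of Theorem~\ref{thm:simplex} proved in Section~\ref{sec:unimodal}: in the bipyramid case one must have $\alpha(R)=0$ for every region $R$ with $\pi(R,P)\not\cong\partial\triangle_{d-1}$, and since $\nu$ is strictly positive on all full-dimensional cones while a bipyramid (for $d\ge 3$) admits projection directions whose shadow is not a simplex, the case is excluded in one line. You instead argue metrically: split the bipyramid into two simplices along the equator, use additivity of $\nuInt_0$ (correct --- the two tangent cones at each equatorial vertex meet in a cone inside the base hyperplane, so simplicity of $\nu$ applies, and each apex sees only its own half), use $\gInt_1=\aInt_0$ (also correct), and invoke the strict bound $\nuInt_0(\Sigma)<\tfrac12$ for every $d$-simplex with $d\ge 3$. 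That last bound is the crux, and you only justify it for $d=3$ (Gaddum); for general $d$ it is not something you can simply wave at, but it does follow from the paper's own Theorem~\ref{thm:angle_proj}: specializing to $\alpha=\nu$ and $P=\triangle_d$ gives $2\nuInt_0(\triangle_d)=(d+1)-\sum_{R}\nu(R)\,f_0(\pi(R,\triangle_d))\le 1$, with equality if and only if every shadow of $\triangle_d$ is a $(d-1)$-simplex --- which fails for $d\ge 3$, and strict positivity of $\nu$ then gives the strict inequality (the same positivity the paper's route uses). With that supplied your proof closes. Your final remark is a genuinely good catch: for $d=2$ every quadrilateral has $\gInt(\nu,\cdot)=(0,1,1)$, so the corollary requires $d\ge 3$; the paper leaves this restriction implicit, and its own argument also degenerates there since $\pi(R,P)\cong\partial\triangle_1$ for every region of a polygon.
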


After an introduction to cone angles in the next section, we
go on to prove the first half of Theorem~\ref{thm:main} in
Section~\ref{sec:gInt}, where we will introduce a method for
transferring linear inequalities from the $h$-vector of certain
relative simplicial complexes to inequalities of the
$\gInt$-vector. In Section~\ref{sec:stanley_reisner}, we will use
methods from algebraic combinatorics to show the necessary
inequalities of the relative simplicial complexes leading to the
second half of Theorem~\ref{thm:main}. Combining the work of the
previous two sections, we will be able two show
Theorem~\ref{thm:when_unimodal} and Theorem~\ref{thm:simplex} in
Section~\ref{sec:unimodal}.

\textbf{Acknowledgments.} The author would like to thank Raman Sanyal
for insightful discussions.

\section{Cone angles} \label{sec:prelim}%
\newcommand{\sphere}{\mathbb{S}}%
We begin with the thorough introduction of cone angles and interior
angle vectors. We call a map from the set $\Cones^d$ of all convex
polyhedral cones in $\R^d$ with apex at the origin to $\R$ a
\Defn{valuation}, if it satisfies the \Defn{valuation property}:
\[
    \alpha(C \cup C') \ = \ \alpha(C) + \alpha(C') - \alpha(C \cap C')
\]
for all $C, C' \in \Cones^d$ such that $C \cup C' \in \Cones^d$. We
call such a valuation \Defn{simple}, if $\alpha(C) = 0$ for all cones
$C \in \Cones^d$ with $\dim C < d$ and \Defn{normalized}, if
$\alpha(\R^d) = 1$. If the valuation $\alpha$ has both properties, we
call $\alpha$ a \Defn{cone angle}, cf.\ \cite{BMS}. If $\alpha(C) \geq 0$ for all
$C \in \Cones^d$, we say that $\alpha$ is \Defn{non-negative}.

One can think of cone valuations and cone angles as a generalization
of measures on the sphere. If $\mu$ is a measure on (the Borel
$\sigma$-algebra of) the sphere $\sphere^{d-1}$, we can define a
valuation, which we will also denote with $\mu$ by:
\[
    \mu(C) \ \defeq \ \mu(C \cap \sphere^{d-1}).
\]
The valuation property follows from the stronger $\sigma$-additivity of
measures. The most important example of a cone angle from a measure on
the sphere is the \Defn{(normalized) spherical volume} or the
\Defn{standard cone angle} $\nu : \Cones^d \to \R$:
\[
    \nu(C) \ \defeq \  \frac{\vol_d(C \cap B_1(0))}{\vol_d(B_1(0))}\,.
\]

While the most natural examples of cone angles arise from measures, it
is worth mentioning that not all valuations are obtained through this
construction. As an example, let $q \in \sphere^d$. Then
\begin{equation}\label{eq:omega_def}
    \omega_q(C) \ \defeq \ \lim_{\eps \to 0} \frac{\vol_d(B_\eps(q) \cap
    C)}{\vol_d(B_\eps(q))}
\end{equation}
defines a cone angle. We note that $\omega_q(C) = 0$, if $q \notin C$,
$\omega_q(C) = 1$, if $q \in \interior C$ and
$\omega_q(C) = \frac{1}{2}$, if $q$ is in the interior of a facet of
$C$. Thus $\omega_q$ is not continuous and therefore not a measure.

When evaluating a valuation, it is often necessary to subdivide a cone
into smaller pieces. This can be done by the Principle of
Inclusion-Exclusion:
\begin{prop}[Principle of Inclusion-Exclusion (IE)]\label{prop:IE} Let $\alpha : \Cones^d \to \R$ be any
    valuation and let $C = \bigcup_{i = 1}^n C_i$ for
    $C, C_1, \dots, C_n \in \Cones^d$. Then
    \[
        \alpha(C) \ = \ \sum_{\emptyset \neq I \subseteq [n]}
        (-1)^{|I|+1} \, \alpha \Big(\bigcap_{i \in I} C_i \Big)\,,
    \]
    where the sum is over all non-empty subsets of
    $[n] \defeq \{1, 2, \dots, n\}$. If additionally $\alpha$ is
    simple and $\dim (C_i \cap C_j) < d$ for $i \neq j$, then
    \[
        \alpha(C) \ = \ \sum_{i = 1}^n \alpha(C_i)\,.
    \]
\end{prop}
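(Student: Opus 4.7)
My plan is to prove the first identity by induction on $n$. The base case $n=1$ is trivial and $n=2$ is precisely the valuation property. For $n \geq 3$, the natural move is to apply the valuation property to the decomposition $C = (C_1 \cup \cdots \cup C_{n-1}) \cup C_n$, use the valuation property to split off $C_n$, then apply the inductive hypothesis to $C_1 \cup \cdots \cup C_{n-1}$ and to $(C_1 \cap C_n) \cup \cdots \cup (C_{n-1} \cap C_n)$. The Boole-Bonferroni-type rearrangement of the resulting sum then gives the claimed formula.

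The main obstacle is that this induction step is not obviously legal: the partial union $C_1 \cup \cdots \cup C_{n-1}$ need not be a convex polyhedral cone, so the valuation property, which only applies when both summands and their union lie in $\Cones^d$, cannot be invoked directly. The clean resolution is to extend $\alpha$ by Groemer's integral theorem to the $\mathbb{Z}$-linear span of indicator functions of elements of $\Cones^d$ (i.e.\ to the polyconvex ring generated by $\Cones^d$). Once this extension is in place, the identity reduces to the tautology
\[
    \mathbf{1}_{\bigcup_{i=1}^n C_i} \ = \ 1 - \prod_{i=1}^n (1 - \mathbf{1}_{C_i}) \ = \ \sum_{\emptyset \neq I \subseteq [n]} (-1)^{|I|+1}\, \mathbf{1}_{\bigcap_{i \in I} C_i}\,,
\]
applied to both sides of the extended $\alpha$. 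I would expect the paper either to cite Groemer in this form or to appeal to a conic version recorded in \cite{BMS}; verifying that Groemer-type extension goes through in the conic setting is the only genuinely technical point.

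The second statement follows immediately from the first. If $\alpha$ is simple and $\dim(C_i \cap C_j) < d$ for all $i \neq j$, then for every $I \subseteq [n]$ with $|I| \geq 2$ the intersection $\bigcap_{i \in I} C_i$ is contained in $C_i \cap C_j$ for some $i \neq j$ and hence has dimension strictly less than $d$; simplicity then forces $\alpha(\bigcap_{i \in I} C_i) = 0$. Only the singleton terms survive and one is left with $\alpha(C) = \sum_{i=1}^n \alpha(C_i)$, as claimed.
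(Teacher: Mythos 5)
The paper states this proposition without proof, treating it as standard background on valuations, so there is no in-paper argument to compare against; judged on its own terms, your proposal is correct and, importantly, it identifies the one genuine subtlety. The naive induction on $n$ really does break down exactly where you say: $C_1 \cup \dots \cup C_{n-1}$ need not lie in $\Cones^d$, and the valuation property as defined only applies when the union is again a convex cone. The fix via Groemer's extension theorem — extending $\alpha$ to a linear functional on the $\Z$-span of indicator functions of cones in $\Cones^d$ and then evaluating the identity $\mathbf{1}_{\cup_i C_i} = 1 - \prod_i(1-\mathbf{1}_{C_i})$ — is the standard and cleanest route; the conic version does go through, since the class of polyhedral cones with apex at the origin is intersectional and Groemer's hyperplane-dissection induction works verbatim with linear hyperplanes through the origin (this is also essentially the setting of the Perles--Sallee/Groemer results on weak valuations of polyhedra, and the version for cones is recorded in the reference \cite{BMS} that the paper cites for cone valuations). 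Your derivation of the second identity from the first is complete and correct: for $|I|\ge 2$ the intersection $\bigcap_{i\in I}C_i$ sits inside some $C_i\cap C_j$ with $i\ne j$, hence has dimension less than $d$ and is killed by simplicity, leaving only the singleton terms.
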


\newcommand{\faces}{\mathcal{F}}%
\newcommand{\pfaces}{{\mathcal{F}_+}}%
Let $P$ be a $d$-polytope given as the finite intersection of $n$
half-spaces
$H_k^{\leq} \defeq \{x \in \R^d : \inner{a_k, x} \leq b_k\}$ for
$k = 1, \dots, n$, each with a facet-defining hyperplane
$H_k \defeq \{x \in \R^d : \inner{a_k, x} = b_k\}$.  Let $\faces(P)$
be the face lattice of $P$ and let
$\pfaces(P) \defeq \faces(P) \setminus \{ \emptyset \}$ be the set of
non-empty faces. We will write $\faces_i(P)$ for the set of all
$i$-faces of $P$. For $q \in P$, define the \Defn{tangent
  cone} $T_qP$ to be
\begin{align*}
  T_qP \ \defeq& \ \{ x \in \R^d : q + \eps x \in P \text{ for some  $\eps > 0$ small} \}\,.
\end{align*}
The following well-known observation shows that this expression depends only on the unique face $F$ containing $q$ in its relative interior:

\begin{prop} Let $q \in \relint F$, where $F$ is a non-empty face of
    $P$. Then
\[
  T_qP \ = \ \{x \in \R^d : \inner{a_k, x} \leq 0 \text{ for all
      $k = 1, \dots, n$ with $F \subseteq H_k$} \}\,.
\]
\end{prop}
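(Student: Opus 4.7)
The plan is to split the defining half-spaces of $P$ into two groups according to whether the hyperplane $H_k$ contains $F$ (the \emph{active} constraints) or not, and to argue that only the active ones contribute to the tangent cone, while the inactive ones impose no local constraint at $q$.

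The first key step is to establish a standard fact about faces of polytopes: if $q \in \relint F$, then the set of indices $k$ with $\inner{a_k, q} = b_k$ coincides with the set of $k$ for which $F \subseteq H_k$. The inclusion ``$\supseteq$'' is immediate since $q \in F$. For the reverse, if $F \not\subseteq H_k$ then $F \cap H_k$ is a proper face of $F$; since $q$ is in the relative interior of $F$ it lies in no proper face, so $\inner{a_k, q} \neq b_k$, and combined with $q \in P \subseteq H_k^{\leq}$ this yields the strict inequality $\inner{a_k, q} < b_k$.

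With this in hand, both inclusions of the claimed identity follow by a short direct calculation. For ``$\supseteq$'', given $x$ in the right-hand cone, I would check that $q + \eps x \in P$ for all sufficiently small $\eps > 0$: for active $k$ the computation $\inner{a_k, q + \eps x} = b_k + \eps \inner{a_k, x} \leq b_k$ holds for all $\eps > 0$ by hypothesis, whereas for inactive $k$ the strict inequality $\inner{a_k, q} < b_k$ from the previous paragraph gives some positive slack, so $\inner{a_k, q + \eps x} \leq b_k$ for all small enough $\eps$; since there are only finitely many facets, a common $\eps$ works. For ``$\subseteq$'', given $x \in T_qP$ choose $\eps > 0$ with $q + \eps x \in P$; then for active $k$ the same computation forces $\inner{a_k, x} \leq 0$, which is precisely the defining condition of the right-hand side.

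There is no real obstacle: the argument is entirely routine once the strictness $\inner{a_k, q} < b_k$ at inactive facets is secured, which is the only place where the hypothesis $q \in \relint F$ (as opposed to $q \in F$) is used in an essential way.
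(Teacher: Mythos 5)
Your argument is correct and complete. The paper states this proposition as a ``well-known observation'' and gives no proof, so there is nothing to compare against; your write-up is exactly the standard argument one would supply, and the one point that needs care --- that $q \in \relint F$ forces the strict inequality $\inner{a_k, q} < b_k$ at every inactive facet --- is handled properly.
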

We therefore set $T_FP \defeq T_qP$ for some $q \in \relint F$ and we
also set $T_{\emptyset}P = \{0\}$. Evaluating the cone angle
$\alpha : \Cones^d \to \R$ at the tangent $T_FP$ of a face $F$ in a
$d$-polytope $P$ gives the \Defn{interior angle of $P$ at $F$ with
  respect to $\alpha$}, which we will denote with a hat above
$\alpha$:
\[
    \aInt(F, P) \ \defeq \ \alpha(T_FP)\,.
\]
Summing these expressions over all $i$-faces of $P$ gives the entries
of the \Defn{$\aInt$-vector} $\aInt(P)$, i.e. for $0 \leq i \leq d-1$,
let
\[
    \aInt_i(P) \ \defeq \ \sum_{F \in \faces_i(P)} \aInt(F, P)\,.
\]

Note that for $\alpha = \nu$, this gives precisely the definition of
$\nuInt$ in \eqref{eq:nu_def}.  We also remark that there is a
dual notion of an exterior angle $\aExt(F, P)$ and of an exterior angle
vector $\aExt(P)$, which comes from the evaluation of $\alpha$ at the
\Defn{normal cone $N_FP$}:
\[
    N_FP \ \defeq \ \{c \in \R^d : \inner{c, x} \geq \inner{c, y} \text{
      for all $x \in F$, $y \in P$}\} + \aff(F - F)\,.
\]
This definition of normal cone differs from the usual definition by
the term $\aff(F - F)$, which forces $N_FP$ to always be a
full-dimensional cone, so that $\aExt(F, P) \defeq \alpha(N_FP)$ is
not always zero for $\dim F > 0$. While being important in the overall
theory of angles, see \cite{BMS, McM-angle}, we will be only concerned
with interior and not with external angles here.

Let $H'_k \defeq \{x \in \R^d : \inner{a_k, x} = 0\}$ be the facet
defining hyperplanes of $P$ moved to the origin. We associate to $P$
the hyperplane arrangement $\Arr(P)$:
\[
    \Arr(P) \ \defeq \ \{H_1', H_2', \dots, H_n'\}\,.
\]
\newcommand{\Reg}{\mathcal{R}}%
The closures of the connected components of
$\R^d \setminus \bigcup \Arr(P)$ are called the \Defn{regions} of
$\Arr(P)$ and we denote the set of all of them by $\Reg(P)$. The
following is an easy, but crucial observation for the rest of the paper:
\begin{lem}\label{lem:H0_decomp}
    Let $P$ be a $d$-polytope and $F \in \pfaces(P)$. Each tangent
    cone $T_FP$ of some face $F$ of $P$ is a union of some regions
    in $\Reg(P)$, i.e.
    \[
        T_FP \ = \ \bigcup_{\substack{R \in \Reg(P)\\R \subseteq T_FP}} R\,.
    \]
\end{lem}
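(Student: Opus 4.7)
The plan is to use the explicit description of $T_FP$ as an intersection of half-spaces whose boundary hyperplanes all lie in $\Arr(P)$. Concretely, by the preceding proposition,
\[
    T_FP \ = \ \bigcap_{k : F \subseteq H_k} \{x \in \R^d : \inner{a_k, x} \leq 0\}\,,
\]
and each bounding hyperplane $H'_k = \{\inner{a_k, x} = 0\}$ is an element of $\Arr(P)$. The inclusion $\supseteq$ in the claimed identity is immediate, so the content is the inclusion $\subseteq$.

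First I would establish the following key observation: for any region $R \in \Reg(P)$ and any $k \in [n]$, the linear form $\inner{a_k, \cdot}$ has constant (non-strict) sign on $R$. This is because $\interior R$ is a connected component of $\R^d \setminus \bigcup \Arr(P)$, hence disjoint from $H'_k$, so $\inner{a_k, \cdot}$ is either strictly positive or strictly negative on $\interior R$; by continuity and the fact that $R = \overline{\interior R}$, the sign extends weakly to $R$. Applied to all indices $k$ with $F \subseteq H_k$ simultaneously, this dichotomy means that each region $R$ either satisfies $\inner{a_k, \cdot} \leq 0$ on $R$ for all relevant $k$ (so $R \subseteq T_FP$) or else $\inner{a_k, x} > 0$ for some relevant $k$ and some $x \in \interior R$, in which case $R \cap T_FP$ is contained in a proper face of $R$.

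Next I would show that every point of $T_FP$ lies in some region $R$ with $R \subseteq T_FP$. If $x \in \interior T_FP$, then $x$ has a neighborhood inside $T_FP$, and this neighborhood meets $\interior R$ for the region $R$ containing $x$; any point in $\interior R \cap \interior T_FP$ witnesses $\inner{a_k, \cdot} < 0$ for all relevant $k$ on $\interior R$, so $R \subseteq T_FP$ by the previous paragraph. For $x \in \partial T_FP$, I would pass to a limit: since $T_FP$ is a full-dimensional polyhedral cone, it is the closure of its interior, and since $\Reg(P)$ is finite, a subsequence of any approximating sequence $x_n \in \interior T_FP$ lies in a single region $R \subseteq T_FP$, whose closedness forces $x \in R$.

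No step looks like a serious obstacle here; the content is bookkeeping once one has the observation that $\Arr(P)$ contains the bounding hyperplanes of every tangent cone. The only point to watch is the boundary case above, which must be handled so that the stated equality (and not just that the interior of $T_FP$ is a union of interiors of regions) comes out on the nose.
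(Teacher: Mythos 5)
Your proposal is correct and follows the same route as the paper: the paper's proof is a one-line appeal to the fact that the bounding hyperplanes of $T_FP$ all belong to $\Arr(P)$, hence $T_FP$ is a union of regions, and your argument simply fills in the details of that fact (constant sign of each $\inner{a_k,\cdot}$ on each region, density of interiors, and the closedness/finiteness argument for boundary points). No gaps; the extra care at $\partial T_FP$ is exactly the bookkeeping the paper leaves implicit.
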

\begin{proof}
    It is clear that the right hand side is contained in the left hand
    side. By definition, $T_FP$ is bounded by the facet-defining
    hyperplanes of $P$ moved to the origin, so the facet-defining
    hyperplanes of $T_FP$ are in $\Arr(P)$ and $T_FP$ is a union of
    regions of $\Arr(P)$.
\end{proof}

\begin{example}\label{ex:pentagon1}
As an illustration, the following images shows a pentagon with one of
its vertices $v$ and the arrangement $\Arr(P)$, where the tangent cone
of $v$ and its dissection into the regions is highlighted.
\begin{center}
    \begin{tikzpicture}[scale=0.75]
        \node[circle, inner sep=1pt, fill] (v1) at ( 2, 0) { };
        \node[circle, inner sep=1pt, fill] (v2) at ( 2, 2) { };
        \node[circle, inner sep=1pt, fill] (v3) at ( 0, 3) { };
        \node[circle, inner sep=1pt, fill] (v4) at (-1, 2) { };
        \node[circle, inner sep=1pt, fill] (v5) at (-1, 0) { };
        \node at (-1.3, 2.1) {$v$};
        \draw (v1) -- (v2) -- (v3) -- (v4) -- (v5) -- (v1);
        \node at (0.5, -1) {$P$};
    \end{tikzpicture}\qquad\qquad
    \begin{tikzpicture}[scale=0.75]
        \fill[gray!60!white] (0, 0) -- (2, 2) -- (2, -2) -- (0, -2);
        \draw ( 0,  -2) -- ( 0,  2);
        \draw (-2,   0) -- ( 2,  0);
        \draw (-2,   1) -- ( 2, -1);
        \draw (-2,  -2) -- ( 2,  2);
        \node at (0.75, -1) {$T_vP$};
        \node at (0.5, -3) {$\Arr(P)$};
    \end{tikzpicture}
\end{center}
\end{example}

Since the regions of a hyperplane arrangement intersect in lower
dimensional cones, we can, with the help of Proposition~\ref{prop:IE},
rewrite our quantity of interest, $\aInt_i(P)$, for
$0 \leq i \leq d-1$, as follows:
\begin{equation}
\label{eq:key_obs}
    \aInt_i(P)
    \ = \  \sum_{F \in \faces_i(P)} \alpha(T_FP)
    \ \stackrel{IE}{=} \ 
    \sum_{F \in \faces_i(P)} \sum_{\substack{R \in \Reg(P)\\R \subseteq T_FP}}\alpha(R)
    \ = \ \sum_{R \in \Reg(P)} \alpha(R) \sum_{\substack{F \in \faces_i(P)\\R \subseteq T_FP}} 1\,.
\end{equation}

Motivated by this expression, we find it useful to collect precisely
those $F$, for which $R \subseteq T_FP$ in a relative polytopal
complex. A \Defn{polytopal complex} $\Pi$ is a finite collection of
polytopes in $\R^d$, such that for any face $F$ of a polytope
$P \in \Pi$, we have $F \in \Pi$ and for any two polytopes
$P, P' \in \Pi$, their intersection $P \cap P'$ is a common face of
both $P$ and $P'$, see \cite[Chapter~5.1]{ziegler}. If
$\Gamma \subseteq \Pi$ is a subcomplex of $\Pi$, we call the pair
$\Psi = (\Pi, \Gamma)$ a \Defn{relative polytopal complex} and think
of $\Psi$ as $\Pi$ with the elements of $\Gamma$ being removed. Thus
we will write $P \in \Psi$, if $P \in \Pi$, but $P \notin \Gamma$. The
elements of $\Pi$ (respectively $\Psi$), are called their \Defn{faces}
and the maximal faces under inclusion are called \Defn{facets}. A
(relative) polytopal complex is a (relative) \Defn{$d$-complex}, if
the maximal dimension of one of its facet is $d$ and it is called
\Defn{pure}, if all its facets have the same dimension. Examples of
pure polytopal complexes are the set of all faces of a polytope $P$,
i.e. $\faces(P)$, as well as their boundary complexes
$\partial P \defeq \faces(P) \setminus \{P\}$.

Our main players will be (relative) polytopal complexes arising from
projections. Let $r \in \interior R$ for some region $R \in \Reg(P)$
where $P \subseteq \R^d$ is a $d$-polytope. If we imagine that $r$
points at a light source ``at infinity'', then $F$ is \emph{bright},
if and only if $r \notin T_FP$ and \emph{dark} otherwise. By
Lemma~\ref{lem:H0_decomp}, the set of facets which are bright/dark
does only depend on $R$ and not on the choice of $r$, thus let
$\bar{B}(R, P) \defeq \{F \in \partial P : R \not\subseteq T_FP\}$ be
the subcomplex of $\partial P$ formed by the bright faces and
$D(R, P) = (\partial P, \bar{B}(R, P))$ be the relative subcomplex
formed by the dark faces. Similarly, define
\begin{align*}
  \bar{D}(R, P) \ &\defeq \ \{F \in \partial P : -R \not\subseteq T_FP\} = \bar{B}(-R, P)\,,\\
  B(R, P) \ &\defeq \ (\partial P, \bar{D}(R, P)) = D(-R, P)\,.
\end{align*}
Topologically, $\bar{B}(R, P)$ is just the closure of $B(R, P)$ and the same for $\bar{D}(R, P)$. Finally, let us define the set of faces right at the boundary from bright to dark:
\begin{align*}
    \pi(R, P) \ &\defeq \ \{F \in \partial P : R, -R \not\subseteq T_FP\}\,.
\end{align*}
Clearly, $\pi(R, P)$ is isomorphic to $\partial P'$, where $P'$ is the
image of an orthogonal projection along a ray $r \in \interior R$.
\begin{prop} For all $d$-polytopes $P$ and $R \in \Reg(P)$, the sets
    $\bar{B}(R, P)$ and $\bar{D}(R, P)$ are pure $(d-1)$-complexes. The
    sets $B(R, P)$ and $D(R, P)$ are pure relative $(d-1)$-complexes,
    whereas the set $\pi(R, P)$ is a pure $(d-2)$-complex.
\end{prop}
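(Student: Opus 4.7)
The plan is to deduce everything from two facts about tangent cones and regions. First, if $F' \subseteq F$ then $T_{F'}P \subseteq T_FP$, because the smaller face is cut out by at least as many facet-defining hyperplanes. Second, for a facet $G = P \cap H_k$ of $P$, the tangent cone $T_GP$ is the single closed half-space $\{x : \inner{a_k, x} \leq 0\}$; since $R$ is a region of $\Arr(P)$ it lies entirely on one closed side of the hyperplane $H_k'$, so exactly one of $R \subseteq T_GP$ or $-R \subseteq T_GP$ holds (both would force $R \subseteq H_k'$, contradicting full-dimensionality of $R$).

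The first fact makes $\bar{B}(R, P)$ and $\bar{D}(R, P)$ subcomplexes of $\partial P$, because the conditions $R \not\subseteq T_FP$ and $-R \not\subseteq T_FP$ are inherited by any subface $F' \subseteq F$. The same monotonicity applied upward makes $B(R, P)$ and $D(R, P)$ relative subcomplexes, and simultaneously establishes their $(d-1)$-purity: if $R \subseteq T_FP$ then $R \subseteq T_GP$ for every facet $G \supseteq F$, and every proper face of $P$ is contained in some facet. For purity of $\bar{B}(R, P)$, I take $F \in \bar{B}(R, P)$ and pick $r \in R \setminus T_FP$; some index $k$ with $F \subseteq H_k$ satisfies $\inner{a_k, r} > 0$, and then the facet $G = P \cap H_k$ contains $F$ and uses the same $r$ as witness for $R \not\subseteq T_GP$, so $G \in \bar{B}(R, P)$ is the required $(d-1)$-face above $F$. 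The case of $\bar{D}(R, P)$ is identical with $-R$ in place of $R$.

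The delicate part is purity of $\pi(R, P)$. Closure under subfaces is again immediate from the first fact, while the second fact excludes every facet of $P$ from $\pi(R, P)$, bounding the dimension by $d - 2$. Given $F \in \pi(R, P)$ with $\dim F = i \leq d - 3$, I pass to the vertex figure $P/F$, a polytope of dimension $d - i - 1 \geq 2$ whose face lattice is the interval $[F, P]$ in the face lattice of $P$; under this correspondence facets of $P$ containing $F$ become facets of $P/F$, and $(d-2)$-faces of $P$ containing $F$ become ridges of $P/F$. By the previous paragraph, the facets of $P$ over $F$ split into a nonempty bright set and a nonempty dark set. Since the dual graph of any polytope is connected, some ridge of $P/F$ joins a bright facet of $P/F$ to a dark one; the corresponding $(d-2)$-face $G$ of $P$ over $F$ has its two containing facets on opposite sides of $R$, and the region property then yields both $R \not\subseteq T_GP$ (witnessed by the bright side) and $-R \not\subseteq T_GP$ (witnessed by the dark side), placing $G \in \pi(R, P)$.

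The main obstacle is precisely this last step: plain tangent-cone bookkeeping cannot by itself close a codimension larger than one between $F$ and a $(d-2)$-face of $\pi(R, P)$ containing it. The remedy is to quotient by $\aff(F)$, translating the local picture around $F$ into the combinatorics of the vertex figure $P/F$, and then to invoke the standard connectivity of the dual graph of a polytope in order to run an intermediate-value argument along a dual-graph path from a bright facet to a dark one. Once this setup is chosen, the bright/dark combinatorics developed for $\bar{B}$ and $\bar{D}$ feeds in directly to identify the required ridge, and the proposition follows.
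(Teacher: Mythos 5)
Your proof is correct, and for the only genuinely delicate claim --- purity of $\pi(R,P)$ --- it takes a different route from the paper. The paper's proof verifies just one thing, namely that $\bar{B}(R,P)$ is closed under passing to subfaces (via $T_FP \subseteq T_GP$ for $F \subseteq G$, exactly your ``first fact''), and declares everything else immediate; implicitly, the purity of $\pi(R,P)$ is read off from the identification, stated just before the proposition, of $\pi(R,P)$ with the boundary complex $\partial P'$ of the orthogonal projection of $P$ along a ray $r \in \interior R$, which is a pure $(d-2)$-complex for free. You instead give a self-contained combinatorial verification: the half-space dichotomy for tangent cones of facets, the witness argument producing a bright (resp.\ dark) facet above any face of $\bar{B}$ (resp.\ $\bar{D}$), and then, for $F \in \pi(R,P)$ of codimension at least $3$, passage to the vertex figure $P/F$ and connectivity of its dual graph to locate a $(d-2)$-face of $\pi(R,P)$ above $F$. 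This is more work but buys a proof that does not presuppose the (unproved) isomorphism $\pi(R,P) \cong \partial P'$, and it correctly isolates where purity could fail; your handling of the two containing facets of the located ridge (one bright, one dark, hence $R \not\subseteq T_GP$ and $-R \not\subseteq T_GP$ by monotonicity of tangent cones) is exactly right. One small point worth making explicit is that both the bright and the dark facet sets of $P$ itself are nonempty (e.g.\ because the outer facet normals positively span $\R^d$), which is needed for $\bar{B}(R,P)$ and $\bar{D}(R,P)$ to actually have dimension $d-1$; your paragraph on purity supplies this for the faces of $\pi(R,P)$ but it should also be recorded once for the empty intersection, i.e.\ for $P$ viewed from $R$ directly.
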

\begin{proof} We only need to check that $\bar{B}(R, P)$ is a
    polytopal complex, the other statements follow immediately. Since
    $\bar{B}(R, P)$ is a subset of $\partial P$, we only need to show
    that for faces $F \subseteq G$ we have $G \in \bar{B}(R, P)$
    implies $F \in \bar{B}(R, P)$. But for $F \subseteq G$ we have
    $T_FP \subseteq T_GP$, so if $R \not\subseteq T_GP$, then
    $R \not\subseteq T_FP$.
\end{proof}

\begin{example}
    Continuing Example~\ref{ex:pentagon1}, for a region $R$ the
    following images depict the corresponding relative complexes
    $D(R, P)$, $\pi(R, P)$ and $B(R, P)$.
    \begin{center}
        \begin{tikzpicture}[scale=0.75]
            \fill[gray!60!white] (2, -1) -- (0, 0) -- (2, 0);
            \draw ( 0,  -2) -- ( 0,  2);
            \draw (-2,   0) -- ( 2,  0);
            \draw (-2,   1) -- ( 2, -1);
            \draw (-2,  -2) -- ( 2,  2);
            \node at (1.6, -0.4) {$R$};
            \node at (0.5, -3) {$\Arr(P)$};
        \end{tikzpicture}\qquad\qquad
        \begin{tikzpicture}[scale=0.75]
            \node[circle, inner sep=1.2pt, draw] (v1) at ( 2, 0) { };
            \node[circle, inner sep=1.2pt, draw] (v2) at ( 2, 2) { };
            \node[circle, inner sep=1.2pt, draw] (v3) at ( 0, 3) { };
            \node[circle, inner sep=1.2pt, fill] (v4) at (-1, 2) { };
            \node[circle, inner sep=1.2pt, draw] (v5) at (-1, 0) { };
            \draw[dashed] (v5) -- (v1) -- (v2) -- (v3);
            \draw[very thick] (v3) -- (v4) -- (v5);
            \node at (0.5, -1) {$D(R, P)$};
        \end{tikzpicture}\qquad\qquad
        \begin{tikzpicture}[scale=0.75]
            \node[circle, inner sep=1.2pt, draw] (v1) at ( 2, 0) { };
            \node[circle, inner sep=1.2pt, draw] (v2) at ( 2, 2) { };
            \node[circle, inner sep=1.2pt, fill] (v3) at ( 0, 3) { };
            \node[circle, inner sep=1.2pt, draw] (v4) at (-1, 2) { };
            \node[circle, inner sep=1.2pt, fill] (v5) at (-1, 0) { };
            \draw[dashed] (v1) -- (v2) -- (v3) -- (v4) -- (v5) -- (v1);
            \node at (0.5, -1) {$\pi(R, P)$};
        \end{tikzpicture}\qquad\qquad
        \begin{tikzpicture}[scale=0.75]
            \node[circle, inner sep=1.2pt, fill] (v1) at ( 2, 0) { };
            \node[circle, inner sep=1.2pt, fill] (v2) at ( 2, 2) { };
            \node[circle, inner sep=1.2pt, draw] (v3) at ( 0, 3) { };
            \node[circle, inner sep=1.2pt, draw] (v4) at (-1, 2) { };
            \node[circle, inner sep=1.2pt, draw] (v5) at (-1, 0) { };
            \draw[very thick] (v5) -- (v1) -- (v2) -- (v3);
            \draw[dashed] (v3) -- (v4) -- (v5);
            \node at (0.5, -1) {$B(R, P)$};
        \end{tikzpicture}\qquad\qquad
    \end{center}
\end{example}

The \Defn{$f$-vector} of a $d$-complex $\Pi$ is the vector
$(f_{-1}(\Pi), \dots, f_d(\Pi))$, where $f_i(\Pi)$ counts the number
of $i$-dimensional faces of $\Pi$. With this notation at hand, we can
rewrite the right hand side of \eqref{eq:key_obs} into the more concise form:

\[
    \aInt_i(P) \ = \ \sum_{R \in \Reg(P)} \alpha(R) f_i(D(R, P))\,.
\]

We think of $f$- and $\aInt$-vectors, as well as the $h$- and
$\gInt$-vectors as real-valued sequences, so that we can freely do
arithmetic with them. Any entry outside of the stated range is assumed
to be zero.

In \cite{PS67}, the following statement was proven in the case $\nu = \alpha$ although it is hinted that it holds more generally for even cone angles. 
\begin{thm}\label{thm:angle_proj}
    Let $\alpha$ be a non-negative cone angle and $P \subseteq \R^d$
    be a $d$-polytope. Then
    \[
        \aInt(P)+\aInt(-P) \ \in \ \conv\{f(\partial P) - f(\pi(R, P))
        : R \in \Reg(P)\}\,,
    \]
    where any entry of the $\aInt$-vector outside its usual range is set to zero.
\end{thm}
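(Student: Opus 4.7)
The plan is to apply the region decomposition \eqref{eq:key_obs} to both $P$ and $-P$ and then combine the two expressions using the natural partition of $\partial P$ into the dark, bright, and boundary pieces $D(R,P)$, $B(R,P)$, $\pi(R,P)$.

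First, \eqref{eq:key_obs} gives $\aInt_i(P) = \sum_{R \in \Reg(P)} \alpha(R)\, f_i(D(R,P))$. Since $\Arr(-P) = \Arr(P)$, we have $\Reg(-P) = \Reg(P)$, and the bijection $F \mapsto -F$ between faces of $P$ and $-P$ sends $T_{-F}(-P)$ to $-T_F P$. Hence the condition $R \subseteq T_{-F}(-P)$ defining $D(R,-P)$ is equivalent to $-R \subseteq T_F P$, which is exactly the condition defining $B(R,P)$. This gives $f_i(D(R,-P)) = f_i(B(R,P))$, so that
\[
    \aInt_i(-P) \ = \ \sum_{R \in \Reg(P)} \alpha(R)\, f_i(B(R,P)).
\]

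The main combinatorial step is the identity $f_i(D(R,P)) + f_i(B(R,P)) = f_i(\partial P) - f_i(\pi(R,P))$, which follows from the trichotomy ``$R \subseteq T_F P$'', ``$-R \subseteq T_F P$'', ``neither'' applied to each $F \in \partial P$. The only subtle point, and the main obstacle, is to verify that the first two cases are mutually exclusive: if $R \cup (-R) \subseteq T_F P$ for some $F \in \partial P$, picking a facet-defining hyperplane $H_k \supseteq F$ forces $T_F P \subseteq \{x : \inner{a_k,x} \leq 0\}$, so both $R$ and $-R$ lie in the same closed half-space bounded by $H_k' \in \Arr(P)$. But $R$ is a full-dimensional region of $\Arr(P)$, so its interior is disjoint from $H_k'$ and $R$ lies strictly on one side while $-R$ lies strictly on the other, a contradiction. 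Exhaustiveness and the disjointness of $\pi(R,P)$ from $D(R,P) \cup B(R,P)$ are immediate from the definitions.

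Summing the two expressions yields
\[
    \aInt_i(P) + \aInt_i(-P) \ = \ \sum_{R \in \Reg(P)} \alpha(R)\bigl[f_i(\partial P) - f_i(\pi(R,P))\bigr].
\]
To finish I verify that the coefficients form a convex combination: non-negativity of $\alpha(R)$ is by hypothesis, and $\sum_R \alpha(R) = \alpha(\R^d) = 1$ by the simple-valuation form of Proposition~\ref{prop:IE} applied to the dissection $\R^d = \bigcup_{R \in \Reg(P)} R$, together with normalization of $\alpha$. The boundary indices outside $0 \le i \le d-1$ match automatically: at $i = -1$ one has $f_{-1}(\partial P) = f_{-1}(\pi(R,P)) = 1$ (the empty face lies in $\pi(R,P)$ since $T_\emptyset P = \{0\}$), and at $i = d$ both $f_d(\partial P)$ and $f_d(\pi(R,P))$ vanish since $\pi(R,P)$ is a $(d-2)$-complex. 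Thus the desired vector identity holds coordinatewise.
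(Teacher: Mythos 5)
Your proof is correct and follows essentially the same route as the paper's: apply the region decomposition \eqref{eq:key_obs} to $P$ and $-P$, identify $f(D(R,-P))$ with $f(B(R,P))$ via reflection, use the partition $\partial P = D(R,P) \dcup \pi(R,P) \dcup B(R,P)$, and conclude with $\sum_R \alpha(R) = 1$ and $\alpha(R) \geq 0$. The only difference is that you spell out the disjointness of the dark and bright pieces and the boundary-index conventions, which the paper leaves implicit; both checks are correct.
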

\begin{proof}
    Note that $\Arr(P) = \Arr(-P)$, so their regions coincide:
    $\Reg(P) = \Reg(-P)$.  By reflection in the origin, we can
    transform $D(R, -P)$ into $D(-R, P) = B(R, P)$, and we
    conclude $f(D(R, -P)) = f(B(R, P))$. Furthermore by the
    decomposition of the boundary of $P$,
    $\partial P = D(R, P) \dcup \pi(R, P) \dcup B(R, P)$, one also
    has $f(D(R, P)) + f(B(R, P)) = f(\partial P) - f(\pi(R, P))$.
    Therefore
    \begin{align*}
      \aInt(P) + \aInt(-P)
      \ =& \ \sum_{R \in \Reg(P)} f(D(R, P)) \cdot \alpha(R) +
         \sum_{R \in \Reg(-P)} f(D(R, -P)) \cdot \alpha(R)\\
      \ =& \ \sum_{R \in \Reg(P)} \big( f(D(R, P)) + f(B(R, P)) \big) \cdot \alpha(R)\\
      \ =& \ \sum_{R \in \Reg(P)} \big( f(\partial P) - f(\pi(R, P)) \big) \cdot \alpha(R).
    \end{align*}
    But this is the desired convex combination since $\sum_{R \in
\Reg(P)} \alpha(R) = 1$ and $\alpha(R) \geq 0$ for all $R \in \Reg(P)$.
\end{proof}

If $\alpha$ is an even cone angle, then, as we recall from the
introduction, $P$ is \Defn{$\alpha$-symmetric}, that is,
$\aInt(P) = \aInt(-P)$. In this case, the previous statement resolves
to:
\begin{equation}\label{eq:no_minus}
    2\aInt(P) \ \in \ \conv\{f(\partial P) - f(\pi(R, P))
    : R \in \Reg(P)\}\,.
\end{equation}
Besides the case that $\alpha$ is even, $\alpha$-symmetry does also
occur, if $P$ is \Defn{centrally symmetric}, i.e. if $P = -P$, in
which case $P$ is trivially $\alpha$-symmetric for all cone angles
$\alpha$. The next example shows that in general
$\aInt(P) \neq \aInt(-P)$ and that \eqref{eq:no_minus} does not
generalize to cone angles which are not $\alpha$-symmetric:
\begin{example}\label{ex:triangle} 
    Let $P = \conv(u, v, w)$ be any triangle in $\R^2$ with the origin
    in the interior and set $V = \{u, v, w\}$. We have
    $f(\partial P) = (1, 3, 3)$. $\Arr(P)$ is a hyperplane arrangement
    consisting of three lines through the origin and
    $\Reg(P) = \{T_{u}P, T_{v}P, T_{w}P, -T_{u}P, -T_{v}P, -T_{w}P\}$.
    \begin{center}
        \begin{tikzpicture}[scale=0.9]
            \draw[fill=gray!30!white] (90:2) -- (210:2) -- (330:2) --
            (90:2);%
            \node[anchor=-90] at ( 90:2) {$u$};
            \node[anchor= 30] at (210:2) {$v$};
            \node[anchor=150] at (330:2) {$w$};
            \node at (  0:0) {$P$};
        \end{tikzpicture}\qquad\qquad
        \begin{tikzpicture}[scale=0.9]
            \node at (-3, 0) {$\Arr(P):$};
            \draw (0:0) -- (  0:2);
            \draw (0:0) -- ( 60:2);
            \draw (0:0) -- (120:2);
            \draw (0:0) -- (180:2);
            \draw (0:0) -- (240:2);
            \draw (0:0) -- (300:2);
            \node at ( 30:1.3) {$T_vP$};
            \node at ( 90:1.3) {$-T_uP$};
            \node at (150:1.3) {$T_wP$};
            \node at (210:1.3) {$-T_vP$};
            \node at (270:1.3) {$T_uP$};
            \node at (330:1.3) {$-T_wP$};
        \end{tikzpicture}
    \end{center}
For $x \in V$, we have
    \begin{align*}
      f(D(T_xP, P)) \ = \ (0, 1, 2) \qquad \text{and} \qquad f(D(-T_xP, P)) \ = \ (0, 0, 1)\,.
    \end{align*}
    
    Since $0 \in \interior P$, we have $-x \in \interior T_xP$. Thus
    $\omega_{-x}(R) = 1$ if and only if $R = T_xP$ and $= 0$ otherwise
    for all regions $R \in \Reg(P)$. Let

    \[
        \alpha \ = \ \frac{1}{4}\sum_{x \in V} \omega_{-x} +
        \frac{1}{12} \sum_{x \in V} \omega_{x}\,.
    \]
    We calculate:
    \begin{align*}
      \aInt(P) &\ = \ \frac{1}{4}\sum_{x \in V} \omega_{-x}(T_x(P)) \cdot f(D(T_x(P), P)) +
                 \frac{1}{12} \sum_{x \in V} \omega_{x}(-T_xP) \cdot f(D(-T_x(P), P)) \\
               &\ = \ \frac{3}{4} (0, 1, 2) + \frac{3}{12} (0, 0, 1)
                 = \big(0, \tfrac{3}{2}, \tfrac{7}{4}\big)\,.
    \end{align*}
    Since the only combinatorial type of $\pi(R, P)$ is two points, we
    have $f(\pi(R, P)) = (1, 2, 0)$ for all $R \in \Reg(P)$, but
    \[
        2\aInt(P) \ = \ \big(0, 3, \tfrac{7}{2}\big) \ \neq \ (0, 1,
        3) \ = \ f(\partial P) - (1, 2, 0).
    \] 
\end{example}

The previous example also showed a construction of cone angles with
prescribed values on the regions of an arrangement $\Arr$. For future
reference, let us make this precise:
\begin{lem}\label{lem:alpha_construction}
    Let $\Arr$ be an arrangement of hyperplanes with regions
    $\{R_1, \dots, R_n\}$ and let
    $\lambda_1, \dots, \lambda_n \in [0, 1]$, such that
    $\sum_{i=1}^n \lambda_i = 1$. Then there exists a cone angle
    $\alpha$ such that $\alpha(R_i) = \lambda_i$.
\end{lem}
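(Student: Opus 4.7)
The plan is to realise $\alpha$ as a convex combination of the Dirac-type cone angles $\omega_q$ introduced in \eqref{eq:omega_def}, mimicking the construction already used in Example~\ref{ex:triangle}. Concretely, for each region $R_i$ I would pick a point $q_i$ in the interior of $R_i$ (possible since $R_i$ is a full-dimensional cone) and set
\[
    \alpha \ \defeq \ \sum_{i=1}^{n} \lambda_i \, \omega_{q_i}\,.
\]
The underlying idea is that, on the specific family $\{R_1,\dots,R_n\}$, the valuation $\omega_{q_i}$ behaves like a Kronecker delta at $R_i$, so the prescribed weights $\lambda_i$ are handed back on the nose.

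Next I would verify that $\alpha$ is indeed a cone angle by checking the three defining properties inherited from the $\omega_{q_i}$. The valuation property is preserved under any $\R$-linear combination, so it passes from the $\omega_{q_i}$ to $\alpha$. Each $\omega_{q_i}$ is simple, because for any cone $C$ with $\dim C < d$ one has $\vol_d(B_\eps(q_i)\cap C)=0$, so $\omega_{q_i}(C)=0$. Normalization is immediate: $\omega_{q_i}(\R^d)=1$ for every $i$, whence $\alpha(\R^d)=\sum_i \lambda_i=1$. Since $\lambda_i\geq 0$ and $\omega_{q_i}\geq 0$, the constructed $\alpha$ is automatically non-negative, slightly more than the lemma asks for.

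The final and only substantive step is the evaluation $\alpha(R_j)=\lambda_j$. For $i\neq j$, the point $q_i$ lies in $\interior R_i$ and therefore avoids every hyperplane of $\Arr$; as $R_j$ is bounded by such hyperplanes and meets $R_i$ only along their common (lower-dimensional) boundary, it follows that $q_i\notin R_j$, hence $\omega_{q_i}(R_j)=0$. For $i=j$ we have $q_j\in\interior R_j$, so $\omega_{q_j}(R_j)=1$. Summing gives $\alpha(R_j)=\lambda_j$, as required. I do not foresee any real obstacle: once the ``Dirac-delta'' behaviour of $\omega_{q_i}$ on full-dimensional regions is observed, the proof reduces to bookkeeping.
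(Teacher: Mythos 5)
Your proposal is correct and is essentially identical to the paper's proof: the paper likewise picks a ray $r_i \in \interior R_i$ for each region and sets $\alpha = \sum_i \lambda_i\,\omega_{r_i}$, using the Kronecker-delta behaviour $\omega_{r_i}(R_j) = \delta_{ij}$. Your additional verification that $\alpha$ is a simple, normalized, non-negative valuation is left implicit in the paper but is accurate.
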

\begin{proof}
    Let $r_i \in \interior R_i$ be any ray. From the definition \eqref{eq:omega_def},
    we see
    \[
        \omega_{r_i}(R_j) \ = \ \begin{cases} 1& i = j\,,\\ 0&i \neq j\,. \end{cases}
    \]
    and therefore
    $\alpha \defeq \sum_{i = 1}^n \lambda_i \omega_{r_i}$ has the
    desired properties.
\end{proof}
\section{Simplicial polytopes and the $\gInt$-vector}\label{sec:gInt}
In this section, we will introduce the $\gInt$-vector and show its
non-negativity.

A (geometrical) \Defn{simplicial complex} $\Delta$ is a polytopal
complex where each face $\sigma \in \Delta$ is a simplex. Likewise, a
\Defn{relative simplicial complex} is a relative polytopal complex
$(\Delta, \Gamma)$, where $\Delta$ (and therefore $\Gamma$) is
simplicial. Two relative simplicial complexes $(\Delta, \Gamma)$ and
$(\Delta', \Gamma')$ are called \Defn{isomorphic},
$(\Delta, \Gamma) \cong (\Delta', \Gamma')$, if there exists a poset
isomorphism
$(\Delta \setminus \Gamma, \subseteq)
\stackrel{\tiny\cong}{\longrightarrow} (\Delta' \setminus \Gamma',
\subseteq)$.

The $f$-vectors of (relative) simplicial complexes have been a topic
of intensive study \cite{CKS_relative_simp_comp}. While unintuitive at
first, rewriting the $f$-vector into the $h$-vector has been a
fruitful approach. Recall from the introduction, that the
\Defn{$h$-vector} of a simplicial $(d-1)$-complex $\Delta$ is a linear
transformation of its $f$-vector, which can be compactly expressed as
an equation of polynomials:
\[ 
    \sum_{i=0}^{d}f_{i-1}(\Delta) \cdot (t-1)^{d-i} \ \eqdef \
    \sum_{k=0}^{d}h_{k}(\Delta) \cdot t^{d-k} \eqdef h(\Delta, t)\,.
\]
The right hand side is typically called the \Defn{$h$-polynomial} of
$\Delta$. As we want to think of $\Psi = (\Delta, \Gamma)$ as $\Delta$
with the faces in $\Gamma$ removed, we define the $f$- and $h$-vectors
of a relative simplicial $d$-complex $\Psi$ with
$\dim \Delta = \dim \Gamma = d-1$ as follows:
\[
    f_i(\Psi) \ \defeq \ f_i(\Delta) - f_i(\Gamma), \qquad h_i(\Psi) \
    \defeq \ h_i(\Delta) - h_i(\Gamma), \qquad h(\Psi, t) \defeq
    h(\Delta, t) - h(\Gamma, t)\,.
\]

In the same spirit, the \Defn{$\gInt$-vector}
$\gInt(\alpha, P) = (\gInt_0(P), \dots, \gInt_d(P))$ of a simplicial
$d$-polytope with respect to a cone angle $\alpha$ is defined by the
equality of the following polynomials:
\[ 
    \sum_{k=0}^{d}\gInt_{k}(P) \cdot t^{d-k} \ \defeq \
    \sum_{i=0}^{d}\aInt_{i-1}(P) \cdot (t-1)^{d-i}\,.
\]

We have seen that there is a connection between the $f$-vectors of the
polytopal complexes $D(R, P)$ for $R \in \Reg(P)$ and the
$\aInt$-vector of a polytope $P$. In the case that $P$ is simplicial,
$D(R, P)$ is a relative simplicial complex and this connection can be
transferred further from the $h$-vectors of $D(R, P)$ to the
$\gInt$-vector as defined in the introduction (see
Section~\ref{sec:intro}). More precisely:
\begin{align*}
  \sum_{k=0}^{d}\gInt_{k}(P) \cdot t^{d-k} \ \defeq& \ \sum_{i=0}^{d}\aInt_{i-1}(P) \cdot
  (t-1)^{d-i} \ = \ \sum_{i = 0}^d \sum_{R
                   \in \Reg(P)} f_{i-1}(D(R, P)) \cdot \alpha(R) \cdot (t-1)^{d-i}\\
  =& \ \sum_{R \in \Reg(P)} \alpha(R) \sum_{i = 0}^d f_{i-1}(D(R, P))
     \cdot (t-1)^{d-i}
  \ = \ \sum_{R \in \Reg(P)} \alpha(R) \sum_{k = 0}^d h_k(D(R, P)) t^{d-k}\,,
\end{align*}
or by comparing coefficients:
\begin{equation}\label{eq:gInt_from_regions}
    \gInt_k(P) \ = \ \sum_{R \in \Reg(P)} \alpha(R) \cdot h_k(D(R, P))\,.
\end{equation}

This insight allows us to rephrase (in-)equalities on the $h$-vector
of the $D(R, P)$'s into (in-)equalities on the $\gInt$-vector. Our
first application is a generalization of Theorem~\ref{thm:angle_proj}.
For its formulation we need to introduce the \Defn{$g$-vector}
$g(\Delta) = (g_0(\Delta), g_1(\Delta),\dots, g_d(\Delta))$ of a
simplicial $(d-2)$-complex $\Delta$. It is defined via:
\[
    g(\Delta, t) \ \defeq \ \sum_{k=0}^d g_k(\Delta) \cdot t^{d-k} \defeq (t-1)
    h(\Delta, t)\,.
\]

It immediately follows that $g_0(\Delta) = h_0(\Delta)$ and
$g_k(\Delta) = h_k(\Delta) - h_{k-1}(\Delta)$ for $1 \leq k \leq
d$. For the boundary complex of a simplicial $(d-1)$-polytope by the
Dehn-Sommerville relations one has
$g_i(\Delta) = h_i(\Delta) - h_{i-1}(\Delta) = h_{d-i}(\Delta) -
h_{d-i+1}(\Delta) = -g_{d-i}(\Delta)$ for $0 \leq i \leq \lfloor\frac{d}{2}\rfloor$
and for this reason typically the $g$-vector is defined to be only the
first half of what we wrote above. We use the extended version, which
makes it a bit easier to state our next proposition:
\begin{prop}\label{prop:gInt_angle_proj}
    Let $\alpha$ be any cone angle, $P$ be a simplicial $d$-polytope
    and $\gInt(P) = \gInt(\alpha, P)$. Then
    \[
        \gInt(P) + \gInt(-P) \ = \ h(\partial P) - \sum_{R \in
          \Reg(P)} \alpha(R) \cdot g(\pi(R, P))\,.
    \]
\end{prop}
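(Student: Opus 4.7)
The plan is to combine the region-wise formula \eqref{eq:gInt_from_regions} with the decomposition of $\partial P$ into bright/boundary/dark parts, essentially running the argument of Theorem~\ref{thm:angle_proj} at the level of $h$-polynomials rather than $f$-vectors. First I would translate the identity \eqref{eq:gInt_from_regions} into polynomial form
\[
    \sum_k \gInt_k(P)\, t^{d-k} \ = \ \sum_{R \in \Reg(P)} \alpha(R)\, h(D(R,P), t)\,,
\]
and then apply the reflection trick from the proof of Theorem~\ref{thm:angle_proj}: since $\Reg(P) = \Reg(-P)$ and the relative complex $D(R,-P)$ is carried by $x \mapsto -x$ to $D(-R,P) = B(R,P)$, the identity $f(D(R,-P)) = f(B(R,P))$ transfers to $h$-polynomials, giving
\[
    \sum_k \gInt_k(-P)\, t^{d-k} \ = \ \sum_{R \in \Reg(P)} \alpha(R)\, h(B(R,P), t)\,.
\]

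Adding these two expressions, the task reduces to showing that for every region $R$,
\[
    h(D(R,P), t) + h(B(R,P), t) \ = \ h(\partial P, t) - g(\pi(R,P), t)\,.
\]
To get this, I would use the disjoint union $\partial P = D(R,P) \dcup \pi(R,P) \dcup B(R,P)$, which at the level of closed subcomplexes reads $f(\bar B(R,P)) + f(\bar D(R,P)) = f(\partial P) + f(\pi(R,P))$. Rewriting $f(D(R,P)) = f(\partial P) - f(\bar B(R,P))$ and similarly for $B(R,P)$ yields $f(D(R,P)) + f(B(R,P)) = f(\partial P) - f(\pi(R,P))$, and since the map $f \mapsto h$ via $\sum_i f_{i-1}(t-1)^{d-i}$ is linear, the same identity holds for the $h$-polynomials computed with ambient dimension $d$.

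The one subtle point — which I expect to be the main obstacle for the reader, though it is a formal matter — is reconciling dimensions: $\pi(R,P)$ is a pure $(d-2)$-complex, so its natural $h$-polynomial $h(\pi(R,P),t)$ (as defined for a $(d-2)$-complex, hence via the ambient dimension $d-1$) has degree $d-1$, while $h(\partial P,t)$, $h(D(R,P),t)$, and $h(B(R,P),t)$ have degree $d$. The $h$-polynomial of $\pi(R,P)$ obtained from the inclusion-exclusion computation above is formed using the ambient dimension $d$, which picks up an extra factor of $(t-1)$; by the definition of the $g$-vector in the paragraph preceding the proposition, this is exactly $g(\pi(R,P), t)$. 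With this identification in place, summing over $R$ and using $\sum_{R} \alpha(R) = 1$ (the normalization of a cone angle) yields
\[
    \gInt(P,t) + \gInt(-P,t) \ = \ h(\partial P, t) - \sum_{R \in \Reg(P)} \alpha(R)\, g(\pi(R,P), t)\,,
\]
which is the claimed identity after comparing coefficients.
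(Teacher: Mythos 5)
Your proposal is correct and follows essentially the same route as the paper: both arguments combine the region-wise formula \eqref{eq:gInt_from_regions} with the decomposition $\partial P = D(R,P) \dcup \pi(R,P) \dcup B(R,P)$ and the reflection $D(R,-P) \cong B(R,P)$, and both handle the degree shift for the $(d-2)$-complex $\pi(R,P)$ by absorbing the extra factor $(t-1)$ into $g(\pi(R,P),t)$. Your explicit appeal to the normalization $\sum_R \alpha(R) = 1$ is a point the paper leaves implicit, but there is no substantive difference.
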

\begin{proof}
    As in the proof of Theorem~\ref{thm:angle_proj}, we consider for
    each region $R \in \Reg(P)$ the decomposition of the boundary of
    $P$, $\partial P = D(R, P) \dcup \pi(R, P) \dcup B(R, P)$.  We
    conclude:
    \begin{align*}
      h(D(R, P), t) + h(B(R, P), t)
      \ &= \ \sum_{i=0}^{d}\big( f_{i-1}(D(R, P)) + f_{i-1}(B(R, P))\big)(t-1)^{d-i}\\
      \ &= \ \sum_{i=0}^{d}\big( f_{i-1}(\partial P) - f_{i-1}(\pi(R, P))\big)(t-1)^{d-i}\\
      \ &= \ h(\partial P, t) - (t-1) h(\pi(R, P), t) = h(\partial P, t) - g(\pi(R, P), t)\,.
    \end{align*}
    We highlight that we needed to use the $g$-vector here instead of
    the $h$-vector, since $\pi(R, P)$ is a $(d-2)$-complex, while
    $\partial P$, $D(R, P)$ and $B(R, P)$ are
    $(d-1)$-complexes. Noting that $B(R, P) = D(-R, P) \cong D(R, -P)$
    and summing over all regions $R$ gives the statement:
    \[
        \gInt(P) + \gInt(-P) \ = \ \sum_{R \in \Reg(P)} \alpha(R)\big(h(D(R, P)) +
        h(D(R, -P), t)\big) \ = \ h(\partial P) - \sum_{R \in \Reg(P)}
        \alpha(R) g(\pi(R, P))\,.\qedhere
      \]
\end{proof}

The remainder of this section is devoted to the first half of
Theorem~\ref{thm:main}. We will give an illustrative proof using
shellings, but the statement is also a byproduct of our more thorough
investigation of the algebraic combinatorics of these complexes in the
next section. We will provide the basic definitions and facts, for
proofs, we refer to \cite[Chapter~8]{ziegler}.

A \Defn{shelling} of a simplicial $(d-1)$-complex $\Delta$ is an ordering of
the facets $F_1, \dots, F_r$ of $\Delta$, such that for all
$1 \leq k \leq r$,
\[
    \Theta^k \ \defeq \ \{G \cap F_k \in \partial F_k \,:\, G \subseteq F_i \text{ for some } i =1,\dots,k - 1\}
\]
is a pure simplicial $(d-2)$-complex. If $\Delta$ has a shelling it is
called \Defn{shellable}. For a shellable simplicial complex $\Delta$,
let
\[
    \Delta^{k} \ \defeq \  \{G \in \Delta \,:\, G \subseteq F_i \text{ for some } i =1,\dots,k\}\,.
\]
An important property of shellings is their relation to the
$h$-vector. This can be most concretely seen when considering a single
shelling step from $\Delta^{k-1}$ to $\Delta^{k}$: Then
$h(\Delta^{k-1})$ and $h(\Delta^{k})$ are almost identical, except
that the $(d-i)$-th entry increased by one, where
$i \defeq f_{d-2}(\Theta^k)$ is the number of facets of $\Theta^k$.

A relative simplicial complex $\Psi = (\Delta, \Gamma)$ with pure
$\Delta$ and $\Gamma$ and $\dim \Delta = \dim \Gamma$ is called
\Defn{jointly shellable}, if there is a shelling $F_1, \dots, F_r$ of
the facets of $\Delta$, such that the facets $F_1, \dots, F_s$ of
$\Gamma$ are shelled first. Joint shellings are a special case of
(relative) shellings, see \cite[Chapter~III.7]{stanley} and
\cites{AS2016}, but are sufficient in our case.

Since the $h$-vector of a shellable simplicial complex increases in
each shelling step, we can conclude that jointly shellable simplicial
complex $\Psi = (\Delta, \Gamma)$ have
\begin{equation}
    \label{eq:relative_shellable_complexes_have_nonnegative_h_vector}
    h(\Psi) \ = \ h(\Delta) - h(\Gamma) \ \geq \ 0\,.
\end{equation}

An important class of shellable simplicial complexes are the boundary
complexes $\partial P$ of a simplicial polytopes $P$. In fact, there
is an easy way to find a shelling of $\partial P$ via line shellings:
Suppose that $0 \in \interior P$ and consider a ``generic'' line
$\ell$ through $0$. We begin at one of the intersection points of
$\ell$ with $\partial P$ and start to move on $\ell$ away from $P$. At
each point on $\ell$, we consider all the visible facets of $P$. Only
every so often a new facet will be show up, which was previously
hidden, and by taking $\ell$ generically enough this will happen one
facet at a time. Let us write down these facets in the order they will
occur: $F_1, \dots, F_s$. If we traveled $\ell$ long enough to reach
``infinity'', we will start our process again from the other side of
$\ell$ back to $P$. In this phase we will make a note whenever a
facets disappears and is not visible anymore: $F_{s+1}, \dots,
F_r$. Then the whole sequence $F_1, \dots, F_s, F_{s+1}, \dots F_s$ is a
shelling of $P$, called a \Defn{line shelling} of $P$. If we
start our tour on $\ell$ inside the cone $R$ of $\Reg(P)$, we can see
that $(\partial P)_s = \cup_{i=1}^s F_i$ is precisely $\bar{B}(R, P)$
and we have as a corollary:
\begin{cor}
    $D(R, P) = (\partial P, \bar{B}(R, P))$ is a jointly shellable
    simplicial complex.
\end{cor}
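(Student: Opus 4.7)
The plan is to obtain the required joint shelling of $D(R, P) = (\partial P, \bar{B}(R, P))$ as a line shelling of $\partial P$. The paragraph preceding the corollary already carries out the geometric content; what remains is to match this construction against the definition of joint shellability.

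First I would pick a direction $r \in \interior R$ generic enough that the line $\ell = \R r$ induces a well-defined line shelling $F_1, \dots, F_r$ of $\partial P$ in which facets become visible (respectively, invisible) one at a time during the two phases (genericity is a measure-zero condition ruling out $r$ parallel to any facet hyperplane or producing coincident visibility events). After translating so that $0 \in \interior P$, starting from the intersection point $p_+ = \ell \cap \partial P$ on the $R$-side of $P$ ensures that the outward phase proceeds in direction $+r$, toward infinity inside the cone $R$. I would then verify that the facets $F_1, \dots, F_s$ enumerated during this first phase are exactly the facets of $\bar{B}(R, P)$: a facet $F$ with outer normal $a_F$ is visible from a far point $tr$ ($t \gg 0$) iff $\inner{a_F, r} > 0$, and since $r \in \interior R$ and the hyperplane $\{x : \inner{a_F, x} = 0\}$ lies in $\Arr(P)$, this sign is constant on $\interior R$ and is equivalent to $R \not\subseteq T_FP$, i.e., to $F \in \bar{B}(R, P)$.

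With this identification in hand, $F_1, \dots, F_r$ is a shelling of $\partial P$ whose initial segment $F_1, \dots, F_s$ exhausts the facets of $\Gamma = \bar{B}(R, P)$, which is precisely the data of a joint shelling of $(\partial P, \bar{B}(R, P)) = D(R, P)$. The only step requiring any genuine care is matching ``visible from far away in direction $r$'' with ``bright with respect to $R$''; beyond this, the argument is an immediate combination of the line-shelling construction with the bright/dark dichotomy, so I do not expect any substantive obstacle.
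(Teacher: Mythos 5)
Your proposal is correct and is exactly the paper's argument: the corollary is stated as an immediate consequence of the preceding line-shelling discussion, where a line through $\interior P$ in a generic direction $r \in \interior R$ yields a shelling whose first phase enumerates precisely the facets of $\bar{B}(R,P)$. You merely make explicit the sign computation $r \notin T_FP \iff \inner{a_F, r} > 0 \iff R \not\subseteq T_FP$ identifying ``visible at infinity in direction $r$'' with ``bright,'' which the paper asserts without detail.
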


The following lemma is important for proving the Dehn-Sommerville
relations of the $\gInt$-vector. The statement is known as
Dehn-Sommerville relations for simplicial balls, see~\cite{chen_yan,
  novik_swartz} for a more general treatment of Dehn-Sommerville
relations for simplicial manifolds with boundary.
\begin{lem}\label{lem:h_i_to_h_d-i} $h_i(D(R, P)) = h_{d-i}(\bar{D}(R, P))$ and
    $h_i(B(R, P)) = h_{d-i}(\bar{B}(R, P))$ for all $0 \leq i \leq d$.
\end{lem}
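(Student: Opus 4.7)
The plan is to leverage the line shellings of $\partial P$ introduced in the paper, together with a reverse-shelling duality. Start with the line shelling $F_1, \ldots, F_s, F_{s+1}, \ldots, F_r$ of $\partial P$ from $R \in \Reg(P)$, so that the first $s$ facets shell $\bar{B}(R,P)$ and the remaining $r-s$ complete the shelling by attaching the facets of $\bar{D}(R,P)$. A classical fact about shellings of simplicial $(d-1)$-spheres is that the reverse order $F_r, \ldots, F_1$ is again a shelling; write $\widetilde\Theta^{k'}$ for its restriction complex at step $k'$. Since $\partial P$ is a pseudomanifold without boundary, each $(d-2)$-face of $F_j$ lies in exactly one other facet, which either precedes $F_j$ (contributing the face to $\Theta^j$) or follows it (contributing the face to $\widetilde\Theta^{r-j+1}$). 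These contributions are disjoint and exhaust all $d$ ridges of $F_j$, hence
\[
    f_{d-2}(\Theta^j) + f_{d-2}(\widetilde\Theta^{r-j+1}) \ = \ d \qquad \text{for every } 1 \leq j \leq r.
\]

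Now I translate the $h$-vector into a shelling count using the rule from the paper that $h_{d-i}$ increases by one at step $k$ precisely when $f_{d-2}(\Theta^k) = i$. Summing over the first $s$ steps of the forward shelling gives $h_k(\bar{B}(R,P))$ and summing over all $r$ steps gives $h_k(\partial P)$; subtracting yields
\[
    h_k(D(R,P)) \ = \ \#\bigl\{\, j > s : f_{d-2}(\Theta^j) = d-k\,\bigr\}\,.
\]
In the reverse shelling the first $r-s$ steps shell $\bar{D}(R,P)$, so the analogous count delivers
\[
    h_{d-k}(\bar{D}(R,P)) \ = \ \#\bigl\{\, k' \leq r-s : f_{d-2}(\widetilde\Theta^{k'}) = k\,\bigr\}\,.
\]
Under the bijection $j \leftrightarrow k' = r-j+1$ the conditions $j > s$ and $k' \leq r-s$ are equivalent, and by the displayed identity so are $f_{d-2}(\Theta^j) = d-k$ and $f_{d-2}(\widetilde\Theta^{k'}) = k$. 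Matching these two counts gives $h_k(D(R,P)) = h_{d-k}(\bar{D}(R,P))$.

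The second identity, $h_i(B(R,P)) = h_{d-i}(\bar{B}(R,P))$, is the first one applied with $R$ replaced by $-R$, since $B(R,P) = D(-R,P)$ and $\bar{B}(R,P) = \bar{D}(-R,P)$. The main hurdle will be the ridge-counting identity $f_{d-2}(\Theta^j) + f_{d-2}(\widetilde\Theta^{r-j+1}) = d$; although elementary, it relies crucially on the pseudomanifold structure of $\partial P$ and is the geometric heart of the argument, the rest being bookkeeping of shelling numbers.
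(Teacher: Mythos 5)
Your proof is correct and follows essentially the same route as the paper: a line shelling of $\partial P$ that shells $\bar{B}(R,P)$ first, the reversed shelling, the pseudomanifold fact that each ridge of $F_j$ lies in exactly one other facet so that $f_{d-2}(\Theta^j)+f_{d-2}(\widetilde\Theta^{r-j+1})=d$, and matching the complementary $h$-vector contributions. The only difference is notational bookkeeping (you index the reverse shelling by step number rather than by facet), and note that your final count is symmetric under $k\leftrightarrow d-k$, so it is insensitive to the choice of convention for which $h$-entry a shelling step increments.
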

\begin{proof}
    Since $B(R, P) = D(-R, P)$, we only need to show one of the
    equations. Consider a line shelling of $P$ giving a joint shelling
    $F_1, \dots, F_s, \dots, F_r$ of $\partial P$, which shells the
    facets $F_1, \dots, F_s$ of
    $\Gamma \defeq \bar{B}(R, P) = \Delta^s$ first. The reverse
    shelling $F_r, \dots, F_s, \dots, F_1$ is a shelling too, see
    \cite[Lemma~8.10]{ziegler}, so we define
    \begin{align*}
      \tilde\Theta^k \ &\defeq \ \{G \cap F_k \in \partial F_k \,:\, G \subseteq F_i \text{ for some } i = k + 1, \dots, d\}\\
      \tilde\Delta^{k} \ &\defeq \  \{G \in \Delta \,:\, G \subseteq F_i \text{ for some } i = k + 1, \dots, d\}\,.
      \end{align*}
    In $\Delta$ every ridge is contained in exactly two facets, and
    some of the facets sharing a ridge with $F_k$ are shelled before
    and some after $F_k$. The ones shelled after $F_k$ are precisely
    those which are shelled before $F_k$ in the reverse shelling,
    thus
    \[
        \Theta^k \cup \tilde\Theta^k \ = \ \partial F_k\,,
    \]
    and the contribution of $F_k$ to the $h$-vector is at the entry
    $d - f_{d-2}(\Theta^k)$ in the forward and at the entry
    $f_{d-2}(\Theta^k)$ in the reverse shelling. Considering the
    contributions of $F_k$ for $k > s$ both ways we obtain the second
    equality in:
    \[
        h_i(\Psi) \ = \ h_i(\Delta) - h_i(\Delta^s) \ = \
        h_{d-i}(\tilde\Delta^s) \ = \ h_{d-i}(\bar{D}(R, P))\,.\qedhere
    \]
\end{proof}

With this, we are able to prove the first half of Theorem~\ref{thm:main}:
\begin{prop}\label{prop:gInt_ds_non_negative}
    Let $P$ be a simplicial polytope and let $\gInt(P) = \gInt(\alpha, P)$,
    where $\alpha$ is a non-negative cone angle. The $\gInt$-vector
    $\gInt(P)$ satisfies
    \begin{enumerate}
    \item Dehn-Sommerville: $\gInt_i(P) + \gInt_{d-i}(-P) = h_i(P)$,
    \item Non-negativity: $\gInt_i(P) \geq 0$.
    \end{enumerate}
\end{prop}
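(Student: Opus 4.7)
The proof should be a fairly direct consequence of the machinery already assembled: the region decomposition formula \eqref{eq:gInt_from_regions}, the joint shellability of $D(R,P)$ via line shellings, and Lemma~\ref{lem:h_i_to_h_d-i}. My plan is to prove both parts by exchanging the order of summation in \eqref{eq:gInt_from_regions} and reducing everything to identities at the level of a single region $R \in \Reg(P)$.

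For non-negativity I would argue as follows. By \eqref{eq:gInt_from_regions}, we have $\gInt_k(P) = \sum_{R \in \Reg(P)} \alpha(R)\, h_k(D(R,P))$. The corollary preceding the lemma shows that $D(R,P)$ is jointly shellable, so \eqref{eq:relative_shellable_complexes_have_nonnegative_h_vector} gives $h_k(D(R,P)) \geq 0$. Since $\alpha$ is non-negative, each summand is non-negative, hence so is $\gInt_k(P)$.

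For the Dehn--Sommerville identity the main point is to identify $\gInt_{d-i}(-P)$ in a form that combines nicely with $\gInt_i(P)$. Since $\Arr(P) = \Arr(-P)$ we have $\Reg(P) = \Reg(-P)$, and the reflection argument used in the proof of Theorem~\ref{thm:angle_proj} shows $D(R,-P) \cong D(-R,P) = B(R,P)$, so the $h$-vectors agree. Therefore
\[
    \gInt_i(P) + \gInt_{d-i}(-P) \ = \ \sum_{R \in \Reg(P)} \alpha(R) \bigl( h_i(D(R,P)) + h_{d-i}(B(R,P)) \bigr).
\]
Applying Lemma~\ref{lem:h_i_to_h_d-i} replaces $h_{d-i}(B(R,P))$ by $h_i(\bar{B}(R,P))$. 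Finally, because $D(R,P) = (\partial P, \bar{B}(R,P))$ is defined by removing the closed subcomplex $\bar{B}(R,P)$ from $\partial P$, we have $f_j(D(R,P)) + f_j(\bar{B}(R,P)) = f_j(\partial P)$ for all $j$, hence $h_i(D(R,P)) + h_i(\bar{B}(R,P)) = h_i(\partial P) = h_i(P)$. Pulling this constant out of the sum and using that $\alpha$ is normalized, i.e.\ $\sum_{R \in \Reg(P)} \alpha(R) = \alpha(\R^d) = 1$, yields the claimed equation.

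The only step requiring any care is recognizing that the reflection $R \mapsto -R$ provides the combinatorial isomorphism $D(R,-P) \cong B(R,P)$, so that the $h$-vector matches and Lemma~\ref{lem:h_i_to_h_d-i} can be applied in the form needed. Everything else is bookkeeping: the non-negativity of $\alpha$ handles item (2), while normalization and the set-theoretic decomposition of $\partial P$ handle item (1). No further ingredient beyond what is already established in Section~\ref{sec:prelim} and the current section is required.
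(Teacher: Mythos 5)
Your proposal is correct and follows essentially the same route as the paper: non-negativity via \eqref{eq:gInt_from_regions}, joint shellability of $D(R,P)$ and \eqref{eq:relative_shellable_complexes_have_nonnegative_h_vector}; Dehn--Sommerville via the reflection $D(R,-P) \cong B(R,P)$, Lemma~\ref{lem:h_i_to_h_d-i}, and the decomposition $h_i(D(R,P)) + h_i(\bar{B}(R,P)) = h_i(\partial P)$. Your explicit appeal to normalization $\sum_{R} \alpha(R) = 1$ to pull out the constant is a detail the paper leaves implicit, but it is exactly the right justification.
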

\begin{proof}
    As outlined, we prove the properties by considering
    $h_i(D(R, P))$.  The non-negativity follows readily from
    \eqref{eq:gInt_from_regions} and the fact that both
    $\alpha(R) \geq 0$ and $\Psi \defeq D(R, P)$ satisfies
    \eqref{eq:relative_shellable_complexes_have_nonnegative_h_vector}.

    For the Dehn-Sommerville relations, we note that
    $D(R, -P) = B(-R, -P) \cong B(R, P)$ by reflection at the origin,
    and
    \begin{align*}
      \gInt_i(P) + \gInt_{d-i}(-P) \
      =& \ \sum_{R \in \Reg(P)} \alpha(R) \cdot \big(h_i(D(R, P)) + h_{d-i}(D(R, -P)) \big)\\
      =& \ \sum_{R \in \Reg(P)} \alpha(R) \cdot \big(h_i(D(R, P)) + h_{d-i}(B(R,  P)) \big)\\
      =& \ \sum_{R \in \Reg(P)} \alpha(R) \cdot \big(h_i(D(R, P)) + h_i(\bar{B}(R, P)) \big)
         = h_i(P)\,.\qedhere
    \end{align*}
\end{proof}
Remarkably, we can deduce that for $\alpha$-symmetric polytopes the
middle entry of the $\gInt$-vector is combinatorial in even
dimensions.

\begin{cor} Let $P$ be a simplicial and $\alpha$-symmetric
    $2m$-polytope. Then $2\gInt_m(P) = h_m(P)$.
\end{cor}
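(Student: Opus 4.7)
The plan is to specialize the Dehn-Sommerville relation from Proposition~\ref{prop:gInt_ds_non_negative} to the middle entry in even dimension and then use $\alpha$-symmetry to collapse the two summands. Setting $d = 2m$ and $i = m$ in part~(1) of that proposition gives
\[
    \gInt_m(P) + \gInt_m(-P) \ = \ h_m(P)\,.
\]
So it remains only to argue that $\gInt_m(P) = \gInt_m(-P)$ whenever $P$ is $\alpha$-symmetric.

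This is a direct consequence of the definition of the $\gInt$-vector as a linear transformation of the $\aInt$-vector. By hypothesis $\aInt_i(P) = \aInt_i(-P)$ for $i = 0, \dots, d-1$, and the two polynomial identities
\[
    \sum_{k=0}^{d}\gInt_{k}(P) \cdot t^{d-k} \ = \ \sum_{i=0}^{d}\aInt_{i-1}(P) \cdot (t-1)^{d-i}
    \qquad\text{and}\qquad
    \sum_{k=0}^{d}\gInt_{k}(-P) \cdot t^{d-k} \ = \ \sum_{i=0}^{d}\aInt_{i-1}(-P) \cdot (t-1)^{d-i}
\]
then agree coefficient by coefficient, so $\gInt_k(P) = \gInt_k(-P)$ for all $k$. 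Plugging $k = m$ into the displayed Dehn-Sommerville identity yields $2\gInt_m(P) = h_m(P)$, as claimed.

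There is no real obstacle here; the entire content is packaged inside Proposition~\ref{prop:gInt_ds_non_negative} and the remark in the introduction that $\alpha$-symmetry on the $\aInt$-vector is transferred to the $\gInt$-vector through the defining linear change of variables. The only thing worth pointing out in the write-up is that because $d = 2m$ is even, the ``diagonal'' index $i = d - i = m$ exists, which is why the phenomenon is particular to even dimensions.
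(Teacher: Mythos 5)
Your proof is correct and matches the paper's (implicit) argument: specialize the Dehn--Sommerville relation of Proposition~\ref{prop:gInt_ds_non_negative} to $i = m$, $d = 2m$, and use that $\alpha$-symmetry of the $\aInt$-vector passes to the $\gInt$-vector through the defining linear transformation. Nothing further is needed.
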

Again, we close with an example, in fact with the same example as
before. It shows, that the minus sign in our version of the
Dehn-Sommerville equations is necessary:
\begin{example}
    Let $P$ and $\alpha$ be as in Example~\ref{ex:triangle}. We have
    \begin{align*}
      h(D(T_xP, P)) \ = \ (0, 1, 1) \qquad \text{and} \qquad h(D(-T_xP, P)) \ = \ (0, 0, 1)
    \end{align*}
    and therefore $\gInt(P) = (0, \frac{3}{4}, 1)$. But since
    $h(P) = (1, 1, 1)$, the Dehn-Sommerville relations do not hold
    when neglecting the minus sign: $2\gInt_1(P) \neq h_1(P)$. On the
    other hand, $\gInt(-P) = (0, \frac{1}{4}, 1)$ and we see, that
    $\gInt_i(P) + \gInt_{2-i}(-P) = 1 = h_i$ as required.
\end{example}

\section{Relative Stanley-Reisner-Theory}\label{sec:stanley_reisner}
\renewcommand{\k}{\Bbbk}%
In this section we will review some of the concepts of relative
Stanley-Reisner-Theory and use it to prove inequalities on the
$h$-vectors of $D(R, P)$. These inequalities are already known, still,
for completeness we give proofs for them. We refer to
\cite{AS2016} and \cite[Chapter~III.7]{stanley} for details and further
applications.

Let $\Delta$ be a simplicial complex of dimension $d$ on $n$ vertices
and identify the vertices with $[n] \defeq \{1, 2, \dots, n\}$. For
each face $F \in \Delta$, let $V(F)$ be the subset of $[n]$, which
corresponds to the vertices of $F$. For an infinite field $\k$, let
$S \defeq \k[x_1, \dots, x_n]$ and let
$N(\Delta) \defeq 2^{[n]} \setminus \{V(F) : F \in \Delta\}$ be the
\Defn{set of non-faces} of $\Delta$. We define the
\Defn{Stanley-Reisner-Ideal} $I_\Delta \subseteq S$ of $\Delta$ as
follows:
\[
    I_\Delta \ \defeq \ \langle x^I : I \in N(\Delta) \rangle\,,
\]
where $x^I \defeq \prod_{i \in I} x^i$, and the
\Defn{Stanley-Reisner-Ring} $\k[\Delta] \defeq S / I_\Delta$. A
simplicial complex $\Delta$ is said to be a \Defn{Cohen-Macaulay
  complex} (over $\k$), if $\k[\Delta]$ is Cohen-Macaulay. For a
relative simplicial complex $\Psi = (\Delta, \Gamma)$, we similarly
define the \Defn{Stanley-Reisner-Module}
\[
    M[\Psi] \ \defeq \ {I_\Gamma} / {I_\Delta}\,.
\]

We can also think of $M[\Psi]$ as the kernel of the surjection
$\k[\Delta] \to \k[\Gamma]$, i.e.\ there exists a short exact sequence
of graded $S$-modules:
\begin{equation}\label{eq:exact_sequence}
    \begin{tikzpicture}
        \node (a0) at (-4.5, 0) {$0$};
        \node (a1) at (-2.5, 0) {$M[\Psi]$};
        \node (a2) at ( 0, 0) {$\k[\Delta]$};
        \node (a3) at ( 2.5, 0) {$\k[\Gamma]$};
        \node (a4) at ( 4.5, 0) {$0$\,.};
        \draw[->] (a0) -- (a1);
        \draw[->] (a1) -- (a2);
        \draw[->] (a2) -- (a3);
        \draw[->] (a3) -- (a4);
    \end{tikzpicture}
\end{equation}
Assume that both $\Delta$ and $\Gamma$ are $d$-dimensional
Cohen-Macaulay complexes. Since we assumed $\k$ to be infinite, by the
Kind-Kleinschmidt criterion \cite{kind_kleinschmidt} there exists a
regular sequence $(\theta_1, \dots, \theta_d)$ of $\k[\Delta]$, which
is also a regular sequence of $\k[\Gamma]$. We can recover the
$h$-vector of the $\Delta$ and $\Gamma$ by taking the quotient with
$\Theta \defeq \theta_1 S + \dots + \theta_d S$:
\[
    \dim {}(\k[\Delta]/\Theta)_i \ = \ h_i(\Delta)\,, \qquad
    \dim {}(\k[\Gamma]/\Theta)_i \ = \ h_i(\Gamma)\,.
\]

Then, from Lemma~1.1.4 in \cite{bruns_herzog} we
also have an exact sequence of graded $S$-modules when we quotient
\eqref{eq:exact_sequence} by $\Theta$:
\begin{center}
    \begin{tikzpicture}
        \node (a0) at (-4.5, 0) {$0$};
        \node (a1) at (-2.5, 0) {$M[\Psi] / \Theta$};
        \node (a2) at ( 0, 0) {$\k[\Delta] / \Theta$};
        \node (a3) at ( 2.5, 0) {$\k[\Gamma] / \Theta$};
        \node (a4) at ( 4.5, 0) {$0$\,.};
        \draw[->] (a0) -- (a1);
        \draw[->] (a1) -- (a2);
        \draw[->] (a2) -- (a3);
        \draw[->] (a3) -- (a4);
    \end{tikzpicture}
\end{center}

We immediately see that
$\dim {}(M[\Psi]/\Theta)_i = h_i(\Delta) - h_i(\Gamma) = h_i(\Psi) \geq 0$.
A linear form $\omega \in \k[\Delta]_1$, is called a \Defn{Lefschetz
  element} for $\k[\Delta]$, if
$(\k[\Delta]/\Theta)_i \xrightarrow{\cdot \omega^{d - 2i}}
(\k[\Delta]/\Theta)_{d-i}$ is an isomorphism. These linear forms
played an important role in Stanley's proof of the $g$-theorem
\cite{Stanley_gTheorem}, see also \cite{FlemmingKaru}. There, the
following is established:
\begin{thm}\label{thm:lefschetz_exists}
    Let $P$ be a simplicial $d$-polytope and let
    $(\theta_1, \dots, \theta_d)$ be a regular sequence of
    $\k[\partial P]$. Then $\k[\partial P] / \Theta$ has a Lefschetz
    element.
\end{thm}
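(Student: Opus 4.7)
The plan is to follow Stanley's original toric-variety proof of the $g$-theorem, together with the refinements by Fleming--Karu that handle arbitrary simplicial polytopes and arbitrary linear systems of parameters. The statement depends only on the combinatorial type of $P$ and the field $\k$, so I would first reduce to the case where $P$ is rational: any simplicial polytope can be perturbed to a rational one with the same face lattice without disturbing either the Stanley--Reisner ring or the notion of a Lefschetz element.

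For rational simplicial $P$ the normal fan $\Sigma_P$ determines a projective, $\mathbb{Q}$-factorial toric variety $X_P$, and the Danilov--Jurkiewicz presentation identifies its rational cohomology ring with $\mathbb{Q}[\partial P]/\Theta_0$, where $\Theta_0$ is the canonical LSOP built from the coordinates of the primitive ray generators of $\Sigma_P$. The ample class corresponding to $P$ itself gives a linear form $\omega_0$, and the Hard Lefschetz theorem for projective $\mathbb{Q}$-factorial toric varieties asserts that multiplication by $\omega_0^{d-2i}$ is an isomorphism from degree $i$ to degree $d-i$. This establishes the conclusion for this particular $\Theta_0$ over $\mathbb{Q}$.

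The remaining step is to pass from $\Theta_0$ to an arbitrary regular sequence $\Theta$ and from $\mathbb{Q}$ to any infinite field $\k$. Base change handles the field, and the set of pairs $(\Theta,\omega)\in\k[\partial P]_1^{d+1}$ for which $\Theta$ is a regular sequence and $\omega$ is a Lefschetz element is Zariski-open in the two factors, so once it is nonempty it contains a generic pair and in particular yields a Lefschetz element for the generic Artinian reduction. For an arbitrary (possibly non-generic) regular sequence one appeals to the McMullen polytope-algebra interpretation used by Fleming--Karu, which shows that the strong Lefschetz property descends to every individual Artinian reduction of $\k[\partial P]$.

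The main obstacle is Hard Lefschetz itself: whichever route one takes (algebro-geometric via the toric variety, available only for rational $P$, or combinatorial via Karu's intersection cohomology of fans, available in general), this is a genuinely deep input and cannot be avoided by elementary Stanley--Reisner manipulations. The bookkeeping surrounding the reduction to rational $P$, the openness argument, and the descent across different LSOPs is comparatively routine once that single nontrivial ingredient is in place.
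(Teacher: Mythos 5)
You should first be aware that the paper does not prove Theorem~\ref{thm:lefschetz_exists} at all: it is imported as a known result (``There, the following is established'') with pointers to Stanley's proof of the $g$-theorem and to Fleming--Karu. Your outline is therefore not competing with an argument in the paper but expanding its citation, and it identifies the right sources and the right architecture: the statement depends only on the combinatorial type, hard Lefschetz for the projective toric variety of a rational realization (Danilov--Jurkiewicz plus the ample class of $P$) settles the canonical Artinian reduction over $\mathbb{Q}$, and McMullen's polytope algebra as reworked by Fleming--Karu extends this to non-rational realizations. You are also right that hard Lefschetz is the one irreducibly deep input.

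However, two of the steps you dismiss as routine are not. First, ``base change handles the field'' is false as stated: non-vanishing of the relevant Hodge--Riemann/Lefschetz determinants over $\mathbb{Q}$ says nothing about their non-vanishing over an infinite field of positive characteristic, and the Lefschetz property in characteristic $p$ is a separate and much more recent theorem covered by neither Stanley nor Fleming--Karu. Second, the passage from a generic regular sequence to an arbitrary one is not closed by your argument. Zariski-openness only produces a dense open set of good pairs $(\Theta,\omega)$, and Fleming--Karu's convexity-based proof applies to linear systems of parameters arising from the coordinates of an actual geometric realization of $P$; an arbitrary regular sequence of $\Bbbk[\partial P]$ corresponds to an assignment of points to the vertices that makes each facet a basis but need not come from any convex position, so their result does not literally cover it. As written, your sketch establishes the characteristic-zero, convex-position (hence generic) case; to get the theorem in the stated generality one must either restrict to the l.s.o.p.s actually used downstream or invoke the stronger recent anisotropy-type results.
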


We will be only concerned with $\Delta = \partial P$ for some
simplicial polytope $P$, so assume that there is in fact a Lefschetz
element $\omega$ of $\k[\Delta] / \Theta$ and let $i <
\frac{d}{2}$. Then the following diagram commutes:
\begin{center}
    \begin{tikzpicture}
        \node (a1) at (-3, 0) {$(M[\Psi] / \Theta)_i$};
        \node (a2) at ( 0, 0) {$(\k[\Delta] / \Theta)_i$};
        \node (a3) at (-3, -2) {$(M[\Psi] / \Theta)_{i+1}$};
        \node (a4) at ( 0, -2) {$(\k[\Delta] / \Theta)_{i+1}$};
        \draw[->] (a1) -- node[anchor=west]{$\cdot \omega$} (a3);
        \draw[->] (a1) -- (a2);
        \draw[->] (a2) -- node[anchor=west]{$\cdot \omega$} (a4);
        \draw[->] (a3) -- (a4);
    \end{tikzpicture}
\end{center}
and since all maps except for the left downwards arrow are
injective, it has to be injective as well. By looking at the
dimensions we see
\[
    h_i(\Psi) \ = \ \dim (M[\Psi]/\Theta)_i \ \leq \ \dim
    (M[\Psi]/\Theta)_{i+1} \ = \ h_{i+1}(\Psi).
\]

If we instead consider the isomorphism $\cdot \omega^{d - 2i}$, we get
the commutative diagram
\begin{center}
    \begin{tikzpicture}
        \node (a1) at (-3, 0) {$(M[\Psi] / \Theta)_i$};
        \node (a2) at ( 0, 0) {$(\k[\Delta] / \Theta)_i$};
        \node (a3) at (-3, -2) {$(M[\Psi] / \Theta)_{d-i}$};
        \node (a4) at ( 0, -2) {$(\k[\Delta] / \Theta)_{d-i}$};
        \draw[->] (a1) -- node[anchor=west]{$\cdot \omega^{d-2i}$} (a3);
        \draw[->] (a1) -- (a2);
        \draw[->] (a2) -- node[anchor=west]{$\cdot \omega^{d-2i}$} (a4);
        \draw[->] (a3) -- (a4);
    \end{tikzpicture}
\end{center}
and the same argument shows that the left downwards arrow is again
injective. Comparing the dimension gives us the second inequality:
\[
    h_i(\Psi) \ = \ \dim (M[\Psi]/\Theta)_i \ \leq \ \dim
    (M[\Psi]/\Theta)_{d-i} \ = \ h_{d-i}(\Psi).
\]

We have therefore shown:
\begin{prop}\label{prop:h_vector_nondecr_on_first_half}
    Let $\Psi = (\Delta, \Gamma)$ be a relative simplicial complex
    such that both $\Delta$ and $\Gamma$ are $d$-dimensional
    Cohen-Macaulay complexes and such that $\Delta$ has a Lefschetz
    element. Then $0 \leq h_{i-1}(\Psi) \leq h_{i}(\Psi)$ and
    $h_i(\Psi) \leq h_{d-i}(\Psi)$ for all
    $0 \leq i \leq m \defeq \lceil \tfrac{d}{2} \rceil$.
\end{prop}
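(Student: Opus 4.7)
The plan is to exploit the machinery already assembled just above the proposition: a common regular sequence for both $\k[\Delta]$ and $\k[\Gamma]$, together with the Lefschetz element of $\k[\Delta]/\Theta$. By the Kind--Kleinschmidt criterion, since $\k$ is infinite and both $\Delta$ and $\Gamma$ are Cohen--Macaulay of dimension $d$, one obtains a sequence $\Theta = (\theta_1,\dots,\theta_d)$ that is regular for both rings. Quotienting the short exact sequence $0 \to M[\Psi] \to \k[\Delta] \to \k[\Gamma] \to 0$ by $\Theta$ yields the graded short exact sequence of vector spaces, degree by degree, from which $h_i(\Psi) = \dim (M[\Psi]/\Theta)_i$. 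This reading already gives $h_i(\Psi) \geq 0$ at no cost.

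For the monotonicity statements, the idea is to view $M[\Psi]/\Theta$ as a graded submodule (in fact an ideal) of $\k[\Delta]/\Theta$, so that multiplication by any $\omega \in \k[\Delta]_1$ sends $(M[\Psi]/\Theta)_i$ into $(M[\Psi]/\Theta)_{i+1}$. Take $\omega$ to be a Lefschetz element for $\k[\Delta]/\Theta$, guaranteed by Theorem~\ref{thm:lefschetz_exists}. Fix $0 \leq i < \lceil d/2\rceil$ and form the square
\[
\begin{array}{ccc}
(M[\Psi]/\Theta)_i & \hookrightarrow & (\k[\Delta]/\Theta)_i \\
\downarrow \cdot\omega & & \downarrow \cdot\omega \\
(M[\Psi]/\Theta)_{i+1} & \hookrightarrow & (\k[\Delta]/\Theta)_{i+1}
\end{array}
\]
The horizontal inclusions are injective by the exact sequence, and the right vertical map is injective because the Lefschetz property $\cdot\omega^{d-2i}: (\k[\Delta]/\Theta)_i \to (\k[\Delta]/\Theta)_{d-i}$ is an isomorphism and factors through $\cdot\omega$. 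A tiny diagram chase then forces the left vertical map to be injective, and comparing dimensions yields $h_i(\Psi) \leq h_{i+1}(\Psi)$.

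The flawlessness inequality is obtained by the same template but with the vertical maps replaced by $\cdot\omega^{d-2i}$; the right vertical map is now an isomorphism by the very definition of a Lefschetz element, hence injective, which by the same diagram chase forces the left vertical map to be injective and gives $h_i(\Psi) \leq h_{d-i}(\Psi)$.

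There is no real obstacle: the one point that deserves care is verifying that multiplication by $\omega$ (viewed in $\k[\Delta]/\Theta$) legitimately acts on the submodule $M[\Psi]/\Theta$ and that the resulting squares commute. This is automatic because $M[\Psi] = I_\Gamma/I_\Delta$ is an ideal of $\k[\Delta]$, and ideals are stable under multiplication by ring elements; quotienting by $\Theta$ preserves this. Once that is written down, the rest is just reading dimensions off injective maps of finite-dimensional graded pieces.
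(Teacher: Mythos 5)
Your proposal is correct and follows essentially the same route as the paper: a common regular sequence via Kind--Kleinschmidt, the quotiented short exact sequence identifying $h_i(\Psi)$ with $\dim(M[\Psi]/\Theta)_i$, and the two commutative squares with $\cdot\omega$ and $\cdot\omega^{d-2i}$ whose injective right verticals force the left verticals to be injective. Your added remark that $M[\Psi]=I_\Gamma/I_\Delta$ is an ideal of $\k[\Delta]$, so the squares genuinely commute, is a point the paper leaves implicit.
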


As a corollary, we can prove the second half of Theorem~\ref{thm:main}:
\begin{cor}
    Let $P$ be a simplicial polytope and $\gInt(P) = \gInt(\alpha, P)$,
    where $\alpha$ is a non-negative cone angle.
    \begin{enumerate}
    \item $\gInt(P)$ increases in the first half:
        $\gInt_0(P) \leq \gInt_1(P) \leq \dots \leq \gInt_m(P)$, where
        $m \defeq \lceil \tfrac{d}{2} \rceil$, and
    \item $\gInt(P)$ is flawless: $\gInt_i(P) \leq \gInt_{d-i}(P)$ for
        all $0 \leq i \leq \lceil\frac{d}{2}\rceil$.
    \end{enumerate}
\end{cor}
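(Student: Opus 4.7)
The plan is to deduce both inequalities pointwise from Proposition \ref{prop:h_vector_nondecr_on_first_half} applied to each relative complex $\Psi = D(R, P)$, and then to take a non-negative weighted average using \eqref{eq:gInt_from_regions}:
\[
    \gInt_k(P) \ = \ \sum_{R \in \Reg(P)} \alpha(R) \cdot h_k(D(R, P))\,.
\]
Since $\alpha$ is non-negative, every coefficient $\alpha(R)$ is non-negative, so any family of inequalities holding for each $h_k(D(R, P))$ translates directly into the analogous inequality for $\gInt_k(P)$.

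To invoke Proposition \ref{prop:h_vector_nondecr_on_first_half} on $\Psi = (\partial P, \bar{B}(R, P))$, I need to check three hypotheses: that $\partial P$ is Cohen-Macaulay, that $\bar{B}(R, P)$ is Cohen-Macaulay of the same dimension, and that $\partial P$ admits a Lefschetz element. The first point is classical, since $\partial P$ is the boundary complex of a simplicial $d$-polytope. The third point is Theorem \ref{thm:lefschetz_exists}. For the Cohen-Macaulayness of $\bar{B}(R, P)$, I would appeal to the line-shelling construction recalled just before Lemma \ref{lem:h_i_to_h_d-i}: a generic line $\ell$ through the origin meeting $\interior R$ produces a shelling $F_1, \dots, F_s, \dots, F_r$ of $\partial P$ whose initial segment $F_1, \dots, F_s$ is exactly $\bar{B}(R, P)$. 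An initial segment of a shelling is itself a pure shellable $(d-1)$-complex, hence Cohen-Macaulay by Reisner's criterion.

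Once the hypotheses are in place, Proposition \ref{prop:h_vector_nondecr_on_first_half} gives $h_{i-1}(D(R, P)) \leq h_i(D(R, P))$ and $h_i(D(R, P)) \leq h_{d-i}(D(R, P))$ for every region $R \in \Reg(P)$ and every $0 \leq i \leq m = \lceil d/2 \rceil$. Multiplying by $\alpha(R) \geq 0$ and summing over $R$ using \eqref{eq:gInt_from_regions} yields the desired
\[
    \gInt_{i-1}(P) \ \leq \ \gInt_i(P) \qquad \text{and} \qquad \gInt_i(P) \ \leq \ \gInt_{d-i}(P)\,.
\]
The only step that requires genuine thought is the Cohen-Macaulayness of $\bar{B}(R, P)$; this is essentially a byproduct of the shellability machinery already introduced in Section \ref{sec:gInt}, so the overall proof is quite short once the algebraic input from Proposition \ref{prop:h_vector_nondecr_on_first_half} is available.
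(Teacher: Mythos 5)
Your proposal is correct and follows essentially the same route as the paper: the paper likewise verifies the hypotheses of Proposition~\ref{prop:h_vector_nondecr_on_first_half} by noting that $D(R,P)$ is jointly shellable (so $\partial P$ and $\bar{B}(R,P)$ are Cohen--Macaulay), invokes Theorem~\ref{thm:lefschetz_exists} for the Lefschetz element, and then averages via \eqref{eq:gInt_from_regions} using $\alpha(R)\geq 0$.
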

\begin{proof}
    As we have seen, $D(R, P)$ is jointly shellable for each
    $R \in \Reg(P)$. Therefore, both $\Delta = \partial P$ and
    $\Gamma = \bar{B}(R, P)$ are Cohen-Macaulay complexes. By
    Theorem~\ref{thm:lefschetz_exists}, $\partial P$ has a Lefschetz
    element. Together with
    Proposition~\ref{prop:h_vector_nondecr_on_first_half} the
    statement is now an immediate consequence of
    \eqref{eq:gInt_from_regions}.
\end{proof}

\section{The $\gInt$-vector of a simplex and unimodality}\label{sec:unimodal}
Our inequalities give us a somewhat complete picture of the
$\gInt$-vector of a $d$-polytope $P$ in its first half. But besides
being flawless, we cannot say much about what happens in the second
half. In this section, we will look at the question of unimodality. A
sequence $(a_0, a_1, \dots, a_n)$ is called \Defn{unimodal}, if there
exists $0 \leq p \leq n$ such that
$a_0 \leq a_1 \leq \dots \leq a_p \geq \dots \geq a_n$. We show that
we can only hope for unimodality of the $\gInt$-vector in small
dimensions. Unless otherwise stated, let $P$ be an simplicial polytope
and $\gInt(P) = \gInt(\alpha, P)$ for some non-negative cone angle
$\alpha$.

From our previous results,
we have the following observation:
\begin{prop}
    Let $P$ be a simplicial $d$-polytope, $d \leq 3$. Then $\gInt(P)$
    is unimodal.
\end{prop}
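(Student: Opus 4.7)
The plan is to observe that the statement is essentially immediate once one unpacks the inequalities already established in the previous two sections. By the corollary at the end of Section~\ref{sec:stanley_reisner}, the $\gInt$-vector satisfies
\[
    \gInt_0(P) \ \leq \ \gInt_1(P) \ \leq \ \cdots \ \leq \ \gInt_m(P)\,,
\]
where $m \defeq \lceil d/2 \rceil$. So the $\gInt$-vector is already non-decreasing on the first $m+1$ of its $d+1$ entries, and unimodality amounts to controlling the remaining $d - m$ entries.

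The key numerical observation is that for $d \leq 3$ one has $d - m \leq 1$. Explicitly, $\lceil d/2\rceil = d$ for $d \in \{0,1\}$ and $\lceil d/2\rceil = d-1$ for $d \in \{2,3\}$, so at most one entry of the $\gInt$-vector lies strictly past the index $m$. For $d \in \{0,1\}$ the whole vector is already non-decreasing by the corollary, hence unimodal. For $d \in \{2,3\}$, only the single entry $\gInt_d(P)$ lies past index $m$, and any sequence of the form $a_0 \leq a_1 \leq \cdots \leq a_m$ followed by one further term $a_{m+1}$ is unimodal: either $a_{m+1} \geq a_m$, in which case the whole sequence is non-decreasing (peak at $m+1$), or $a_{m+1} \leq a_m$, in which case the peak occurs at $m$.

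There is no real obstacle here; the statement is purely a counting consequence of the non-decreasing first-half inequality. I would present the proof as the three-line argument above, without invoking either flawlessness or the Dehn-Sommerville relations, since neither is needed in these low dimensions.
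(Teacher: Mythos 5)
Your proof is correct and is essentially the paper's own argument: the paper likewise observes that by Theorem~\ref{thm:main}~(3) the $\gInt$-vector is non-decreasing up to index $\lceil d/2\rceil = d-1$ (for $d=2,3$), leaving only the final entry, so unimodality is automatic. Your write-up just spells out the same counting in more detail.
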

\begin{proof}
    By Theorem~\ref{thm:main}~(3), the $\gInt$-vector of $P$ is
    non-decreasing until $\gInt_{d-1}$.
\end{proof}
Nonetheless, we can also see that unimodality fails for simplicial
polytopes in dimension $4$.
\begin{example}
    There exist $4$-dimensional simplicial polytopes with non-unimodal
    $\gInt$-vector, even if $\alpha$ is non-negative. For this take
    the cross-polytope $\Diamond_4$ and transform it projectively such
    that a orthogonal projection along some $u \in \R^4$ has a single
    simplex as image. Let $R \in \Reg(P)$ be the region containing
    $u$. By appealing to Lemma~\ref{lem:alpha_construction} we set
    $\alpha(R) = \frac{1}{6}$ and $\alpha(-R) = \frac{5}{6}$ and get
    \begin{align*}
      \gInt(\Diamond_4) \ &= \ \tfrac{5}{6} (0, 0, 0, 0, 1) + \tfrac{1}{6} (0, 4, 6, 4, 1)\\
       &= \ (0, \tfrac{2}{3}, 1, \tfrac{2}{3}, 1)\,.
    \end{align*}
\end{example}

More can be said, if $P$ is a simplex. We want to denote by
$\triangle_d$ any $d$-simplex in $\R^d$.

\begin{lem}\label{lem:simplex}
    $\gInt(\triangle_d)$ is non-decreasing.
\end{lem}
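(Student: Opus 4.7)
The plan is to exploit the region decomposition formula \eqref{eq:gInt_from_regions} and compute the $h$-vector of $D(R, \triangle_d)$ individually for every region $R \in \Reg(\triangle_d)$. If I can show that for each $R$, the sequence $h_0(D(R, \triangle_d)), h_1(D(R, \triangle_d)), \dots, h_d(D(R, \triangle_d))$ is non-decreasing, then $\gInt_k(\triangle_d) = \sum_R \alpha(R)\, h_k(D(R, \triangle_d))$ is a non-negative linear combination of non-decreasing sequences and hence itself non-decreasing.

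To carry this out, I first identify the vertex set of $\triangle_d$ with $[d+1]$, so that each face $F \in \partial \triangle_d$ corresponds to a proper subset $V(F) \subsetneq [d+1]$; let $a_1, \dots, a_{d+1}$ denote the outer facet normals, indexed so that $a_k$ is the normal of the facet opposite vertex $k$. For a region $R$, set $S(R) \defeq \{k : \inner{a_k, x} > 0 \text{ on } \interior R\}$. A direct inspection of the halfspaces cutting out $T_F \triangle_d$ shows that $R \subseteq T_F \triangle_d$ iff $V(F) \supseteq S(R)$, so the dark $(i{-}1)$-faces of $D(R, \triangle_d)$ are exactly the $i$-subsets of $[d+1]$ containing $S(R)$. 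This yields the explicit formula $f_{i-1}(D(R, \triangle_d)) = \binom{d+1-s}{i-s}$ with $s \defeq |S(R)|$.

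Substituting this $f$-vector into the defining equation of the $h$-polynomial and re-indexing by $j = i - s$ reduces the computation to a textbook binomial identity:
\[
    \sum_{k=0}^{d} h_k(D(R, \triangle_d))\, t^{d-k} \ = \ \sum_{j=0}^{d-s} \binom{d+1-s}{j}(t-1)^{d-s-j} \ = \ \frac{t^{d+1-s} - 1}{t - 1} \ = \ 1 + t + t^2 + \dots + t^{d-s}\,.
\]
Reading off coefficients gives $h_k(D(R, \triangle_d)) = 1$ for $s \leq k \leq d$ and $0$ otherwise; this is a $\{0,1\}$-valued step function in $k$, in particular non-decreasing, which is exactly what is needed.

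No genuinely difficult step arises: the translation from the condition $R \subseteq T_F \triangle_d$ to $V(F) \supseteq S(R)$ is a routine unpacking of the facet description of a simplex, and the binomial manipulation is standard. The only subtlety is index bookkeeping, in particular ensuring that $F = \triangle_d$ itself is not erroneously counted among the dark faces; since the range $i \leq d$ already excludes the full simplex, this takes care of itself. As a bonus, the explicit $\{0,1\}$-form of the $h$-sequence will be the natural entry point for the converse direction of Theorem~\ref{thm:simplex} to be treated later in Section~\ref{sec:unimodal}.
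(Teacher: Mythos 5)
Your proof is correct and follows essentially the same route as the paper: both arguments reduce, via \eqref{eq:gInt_from_regions}, to showing that $h(D(R,\triangle_d))$ is the step vector $(0,\dots,0,1,\dots,1)$ (with $s$ leading zeros) for every region $R$, and then conclude by taking the non-negative combination over regions. The only difference is how that per-region $h$-vector is obtained — you compute it directly from the binomial $f$-vector $f_{i-1} = \binom{d+1-s}{i-s}$, while the paper reads it off a shelling of the $s$ bright facets — and both computations are valid.
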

\begin{proof}
    Let $\Psi_k = (\triangle_d, \Gamma_k)$, where $\Gamma$ is formed by
    any subset of $k$ facets of $\Psi$, $1 \leq k \leq d$. Since any
    ordering of the facets is a shelling, we see that 
    \[
        h(\Gamma_k) \ = \ (\underbrace{1, 1, \dots, 1}_k, 0, 0, \dots, 0) 
    \]
    and therefore
    \[
        h(\Psi_k) \ = \ (\underbrace{0, 0, \dots, 0}_k, 1, 1, \dots, 1)\,.
    \]

    For every region $R \in \Reg(\triangle_d)$, we have
    $D(R, P) \cong \Psi_k$, where $k$ is the number of facets of
    $\bar{B}(R, K) \cong \Gamma_k$. Since the $h$-vector of $D(R, P)$
    is non-decreasing for any $R$, the $\gInt$-vector is too, by
    \eqref{eq:gInt_from_regions}.
\end{proof}

Interestingly, the converse is also almost true for $\alpha$-symmetric
polytopes. A bipyramid over a simplex, or simply a \Defn{bipyramid},
is any polytope combinatorially isomorphic to the convex hull of
$\triangle_{d-1}$ together with two points $v, v'$ on different sides
of the hyperplane $\aff(\triangle_{d-1})$ such that
$\conv(v, v') \cap \triangle_{d-1} \neq \emptyset$.
\begin{prop}
    Let $\alpha$ be a non-negative cone angle and let $P$ be a
    simplicial and $\alpha$-symmetric $d$-polytope, $d \geq 2$. Let
    $\gInt(P) \defeq \gInt(\alpha, P)$ be non-decreasing. Then $P$ is
    either a simplex or a bipyramid. In the latter case,
    $\gInt(P) = (0, 1, \dots, 1)$ and $\alpha(R) = 0$ for all regions
    $R\in \Reg(P)$ with
    $\pi(R, P) \not\cong \partial \triangle_{d-1}$.
\end{prop}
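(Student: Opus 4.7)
The plan is to use the non-decreasing hypothesis to force $f_0(P) \leq d+2$, classify the resulting combinatorial types, and then apply Proposition~\ref{prop:gInt_angle_proj} in the bipyramid case to pin down $\alpha$. The polynomial identity defining $\gInt$ immediately gives $\gInt_0(P) = 0$ (since $\aInt_{-1} = 0$), and combining this with Proposition~\ref{prop:gInt_ds_non_negative} at $i = 0$ together with $\alpha$-symmetry yields $\gInt_d(P) = h_0(P) = 1$. The non-decreasing hypothesis therefore gives $\gInt_i(P) \leq 1$ for every $i \geq 1$, and so the $\alpha$-symmetric form of Dehn-Sommerville
\[
    h_i(P) \ = \ \gInt_i(P) + \gInt_{d-i}(P) \ \leq \ 2 \qquad \text{for } 1 \leq i \leq d-1
\]
forces $h_1(P) = f_0(P) - d \leq 2$, i.e.\ $f_0(P) \leq d+2$.

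If $f_0(P) = d+1$, then $P$ is a simplex. Otherwise $f_0(P) = d+2$ and the classical Gale-diagram classification of simplicial $d$-polytopes with $d+2$ vertices shows that $P$ is combinatorially a direct sum of simplices $\triangle_a \oplus \triangle_b$ with $a + b = d$ and $a, b \geq 1$, whose $h$-polynomial factors as $(1 + t + \cdots + t^a)(1 + t + \cdots + t^b)$. A direct expansion gives $h_2(P) = 3$ whenever $a, b \geq 2$, contradicting the bound $h_2(P) \leq 2$. Thus $\min\{a, b\} = 1$, so $P$ is the bipyramid $\triangle_{d-1} \oplus \triangle_1$ with $h(\partial P) = (1, 2, 2, \ldots, 2, 1)$. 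Non-decreasingness combined with $\gInt_i(P) + \gInt_{d-i}(P) = 2$ and $\gInt_i(P) \leq 1$ then forces $\gInt_i(P) = 1$ for all $1 \leq i \leq d$, so $\gInt(P) = (0, 1, 1, \ldots, 1)$ as claimed.

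For the statement on $\alpha$, Proposition~\ref{prop:gInt_angle_proj} combined with $\alpha$-symmetry yields
\[
    \sum_{R \in \Reg(P)} \alpha(R) \cdot g(\pi(R, P)) \ = \ h(\partial P) - 2\gInt(P) \ = \ (1, 0, \ldots, 0, -1),
\]
and reading off the coefficient of $t^{d-1}$ gives $\sum_R \alpha(R) \cdot g_1(\pi(R, P)) = 0$. Since $\pi(R, P)$ is a simplicial $(d-2)$-sphere and any such sphere has at least $d$ vertices, we obtain $g_1(\pi(R, P)) = f_0(\pi(R, P)) - d \geq 0$, with equality exactly when $\pi(R, P) \cong \partial \triangle_{d-1}$. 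Non-negativity of $\alpha$ then forces $\alpha(R) = 0$ whenever $\pi(R, P) \not\cong \partial \triangle_{d-1}$.

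The only input external to the paper is the classification of simplicial $d$-polytopes with $d+2$ vertices; all remaining ingredients are elementary consequences of the Dehn-Sommerville-type relations already established and the trivial non-negativity of $g_1$ for simplicial spheres, which is a counting statement and does not require the full $g$-theorem.
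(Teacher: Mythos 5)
Your proof is correct, and it departs from the paper's argument at two points. The opening reduction is the same in both: $\gInt_0(P)=0$, $\gInt_d(P)=1$, hence $\gInt_i(P)\le 1$ by monotonicity, and the $\alpha$-symmetric Dehn--Sommerville relations give $h_i(\partial P)\le 2$. To identify the polytope, the paper observes that either some $h_k=1$ (forcing $h_1=1$, i.e.\ a simplex) or $h(\partial P)=(1,2,\dots,2,1)$, and then invokes the equality case of the lower bound theorem to conclude that $P$ is stacked, hence a bipyramid; you instead bound $f_0(P)\le d+2$ and appeal to the Gale-diagram classification of simplicial $d$-polytopes with $d+2$ vertices as free sums $\triangle_a\oplus\triangle_b$, excluding $a,b\ge 2$ via $h_2=3>2$. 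These are classical external inputs of comparable weight, and both are sound; the paper's route has the minor advantage of not needing the full classification, only the characterization of $g_2=0$. In the final step you improve on the paper: the paper cites the unimodality of $h$-vectors of simplicial polytopes (Theorem~\ref{thm:hvectors}(3), i.e.\ part of the $g$-theorem) to get $g_k(\pi(R,P))\ge 0$ for all $k$, whereas you only extract the coefficient $g_1(\pi(R,P))=f_0(\pi(R,P))-d\ge 0$, which is an elementary counting statement for the boundary of a $(d-1)$-polytope, and this already pins down $\pi(R,P)\cong\partial\triangle_{d-1}$ whenever $\alpha(R)>0$. So your argument genuinely weakens the machinery needed in that step.
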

\begin{proof}
    Since $\gInt(P)$ is non-decreasing and $\gInt_d(P) = 1$, we have
    by the Dehn-Sommerville-relations,
    Proposition~\ref{prop:gInt_ds_non_negative}, for all
    $k \leq \frac{d}{2}$:
    \[
        h_k(\partial P) \ = \ \gInt_k(P) + \gInt_{d-k}(-P) \ = \
        \gInt_k(P) + \gInt_{d-k}(P) \ \leq \ 2\gInt_d(P) \ = \ 2\,.
    \]

    If $h_k(P) = 1$ for $k \geq 1$, then
    $1 \leq h_1(\partial P) \leq h_k(\partial P) = 1$ and $P$ is a
    simplex. Otherwise, the only possible $h$-vector of $\partial P$
    is $h(\partial P) = (1, 2, 2, \dots, 2, 2, 1)$, and from, for
    example, the lower bound theorem \cite{barnette_lbt}, we see that $P$ is a bipyramid
    over a simplex.

    We are left to examine what happens in this second case. For
    $1 \leq k \leq \frac{d}{2}$, we see
    \[
        2 \ = \ h_k(P) \ = \ \gInt_k(P) + \gInt_{d-k}(P) \ \leq \ 2\gInt_{d-k}(P)
        \ \leq \ 2\gInt_d(P) \ = \ 2\,,
    \]
    so $\gInt_k(P) = 2 - \gInt_{d-k}(P) = 1$ and
    $\gInt(P) = (0, 1, 1, \dots, 1, 1)$. With
    Proposition~\ref{prop:gInt_angle_proj}, we have
    \[
        0 \ = \ h_k(\partial P) - 2\gInt_k(P) \ = \ \sum_{R \in
          \Reg(P)} \alpha(R) g_k(\pi(R, P))\,.
    \]
    For any region $R \in \Reg(P)$, $g_k(\pi(R, P)) \geq 0$ by the
    characterization of the $h$-vectors of simplicial polytopes,
    Theorem~\ref{thm:hvectors}(3), since $\pi(R, P)$ is isomorphic to
    the boundary complex of a projection of $P$. It follows that
    $g(\pi(R, P)) = (1, 0, 0, \dots, 0, 0, -1)$ and that $\pi(R, P)$
    is the boundary of a simplex for all regions $R$ such that
    $\alpha(R) > 0$.
\end{proof}

Since the standard angle is positive for all full-dimensional
cones $C$, we have the following:
\begin{cor}
    Let $P$ be a simplicial $d$-polytope and let
    $\gInt(P) \defeq \gInt(\nu, P)$. If $P$ has a non-decreasing
    $\gInt$-vector, then $P$ is a simplex.
\end{cor}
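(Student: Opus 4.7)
The plan is to apply the preceding proposition and exclude the bipyramid alternative using the strict positivity of $\nu$ on full-dimensional cones.

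First I would note that $\nu$ is an even cone angle: the reflection $x \mapsto -x$ is volume-preserving, so $\nu(-C) = \nu(C)$ for every $C \in \Cones^d$. Consequently every polytope $P$ is $\nu$-symmetric, and the preceding proposition applies. Under the hypothesis that $\gInt(\nu, P)$ is non-decreasing, the proposition tells us that $P$ is either a $d$-simplex---in which case we are done---or a bipyramid, and in the bipyramid case $\nu(R) = 0$ for every region $R \in \Reg(P)$ with $\pi(R, P) \not\cong \partial \triangle_{d-1}$. But $\nu(R) = \vol_d(R \cap B_1(0))/\vol_d(B_1(0)) > 0$ for every full-dimensional cone $R$, so this forces $\pi(R, P) \cong \partial \triangle_{d-1}$ for \emph{every} region: every orthogonal projection of $P$ must be combinatorially a $(d-1)$-simplex.

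The main obstacle is then to exhibit a direction $r$ along which the projection of $P$ fails to be a $(d-1)$-simplex, assuming $d \geq 3$. Fix coordinates so that $\triangle_{d-1}$ lies in $\{x_d = 0\}$ with centroid at the origin, and the two apexes are $v = e_d$ and $v' = -e_d$. Choose $r$ in $\aff(\triangle_{d-1})$ generically in the relative interior of some region of $\Arr(P)$; in particular $r \perp e_d$, so under the orthogonal projection onto $r^\perp$ the two apex images $\pm e_d$ attain the unique maximum and minimum of the linear functional $\langle e_d, \cdot \rangle$ on the projected polytope and are therefore extreme. The base $\triangle_{d-1}$ projects to a $(d-2)$-dimensional polytope containing the origin in its relative interior, and a short convexity argument---any decomposition of one of its vertices $u$ as a convex combination involving an apex image with positive weight $\lambda_+$ would force the point $u/(1-2\lambda_+)$ to still lie in the projected base, contradicting that $u$ is an extreme point of it---shows that each of its at least $d - 1$ vertices remains extreme in the image. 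Hence the image has at least $d + 1$ extreme vertices and cannot be a $(d-1)$-simplex, contradicting the conclusion of the previous paragraph. This rules out the bipyramid case and proves $P$ is a simplex.
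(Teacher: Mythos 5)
Your route is the same as the paper's, which disposes of this corollary in a single sentence: apply the preceding proposition, and use $\nu(R)>0$ for every full\--dimensional cone $R$ to exclude the bipyramid alternative. The step the paper leaves entirely implicit --- that a bipyramid in dimension $d\ge 3$ always has a region $R$ with $\pi(R,P)\not\cong\partial\triangle_{d-1}$ --- is exactly what you prove, and your vertex count (at least $d-1$ surviving base vertices plus the two apex images exceeds the $d$ vertices of a $(d-1)$\--simplex) is sound. One geometric inaccuracy: a general bipyramid cannot be affinely normalized so that \emph{both} apexes sit at $\pm e_d$, since the two heights need not be equal, and your cancellation $u/(1-2\lambda_+)$ uses $\lambda_+=\lambda_-$. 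The repair is routine: with $v=e_d$ and $v'=(a,-b)$, comparing $x_d$\--coordinates gives $\lambda_-=\lambda_+/b$, and the corresponding barycenter of the two apex images is the projection of the point $\conv(v,v')\cap\triangle_{d-1}$, which lies in the (relative interior of the) projected base; so a positive apex weight again exhibits $u$ as a nontrivial convex combination of points of the projected base, contradicting its extremality there.

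The more substantive point is your restriction to $d\ge 3$, which is not merely a convenience. For $d=2$ a bipyramid is a convex quadrilateral $P$, every $\pi(R,P)$ consists of two points and hence is isomorphic to $\partial\triangle_1$, and indeed $\gInt(\nu,P)=(0,1,1)$ is non\--decreasing while $P$ is not a simplex. So the corollary as stated fails in dimension $2$ and no argument can close that case; this gap lies in the statement itself (and in the paper's one\--line justification), not in your approach, but it should be flagged explicitly rather than absorbed into an unexamined hypothesis.
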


In the remainder we want to examine the unimodality of
$\alpha$-symmetric polytopes. First we need the following lemma:

\begin{lem}
    Either a simplicial and $\alpha$-symmetric $d$-polytope $P$ is a
    simplex or $\gInt_{d-1}(P) \geq \gInt_d(P) = 1$.
\end{lem}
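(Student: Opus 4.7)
The plan is to verify the equality $\gInt_d(P)=1$ directly, and then establish $\gInt_{d-1}(P)\geq 1$ in the non-simplex case via a symmetric averaging argument. The first claim follows from \eqref{eq:gInt_from_regions} and Lemma~\ref{lem:h_i_to_h_d-i}: since $h_d(D(R,P))=h_0(\bar{D}(R,P))=1$ for every region $R$, summing with weights $\alpha(R)$ gives $\gInt_d(P)=\sum_R\alpha(R)=1$.

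For the inequality, I would assume $P$ is not a simplex, so $f_0(P)\geq d+2$. The strategy is to compute $\gInt_{d-1}(P)$ in two different ways. First, because $\bar{D}(R,P)$ is a pure $(d-1)$-complex, $h_1(\bar{D}(R,P))=f_0(\bar{D}(R,P))-d$, and combining Lemma~\ref{lem:h_i_to_h_d-i} with \eqref{eq:gInt_from_regions} yields
\[
    \gInt_{d-1}(P) \ = \ \sum_{R\in\Reg(P)}\alpha(R)\bigl(f_0(\bar{D}(R,P))-d\bigr).
\]
On the other hand, $\alpha$-symmetry gives $\gInt_{d-1}(P)=\gInt_{d-1}(-P)$, and using the isomorphism $D(R,-P)\cong B(R,P)$ (reflection at the origin, as in the proof of Theorem~\ref{thm:angle_proj}) the same quantity equals
\[
    \gInt_{d-1}(P) \ = \ \sum_{R\in\Reg(P)}\alpha(R)\bigl(f_0(\bar{B}(R,P))-d\bigr).
\]

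Averaging these two identities and using the vertex-level inclusion-exclusion $f_0(\bar{D}(R,P))+f_0(\bar{B}(R,P))=f_0(\partial P)+f_0(\pi(R,P))$, which follows from $\bar{D}(R,P)\cup\bar{B}(R,P)=\partial P$ and $\bar{D}(R,P)\cap\bar{B}(R,P)=\pi(R,P)$, I obtain
\[
    2\gInt_{d-1}(P) \ = \ f_0(P) + \sum_{R\in\Reg(P)}\alpha(R)\,f_0(\pi(R,P)) - 2d.
\]
Since $\pi(R,P)$ is the boundary complex of a $(d-1)$-polytope it has at least $d$ vertices, and combined with $f_0(P)\geq d+2$ this gives $2\gInt_{d-1}(P)\geq (d+2)+d-2d=2$, so $\gInt_{d-1}(P)\geq 1=\gInt_d(P)$.

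I do not anticipate a substantive obstacle here. The conceptual point is to observe that $\alpha$-symmetry lets one trade $\bar{D}(R,P)$ for $\bar{B}(R,P)$ in the weighted sum, so the two vertex counts add across the silhouette and reduce the inequality to the elementary bound $f_0\geq d$ for a $(d-1)$-polytope. The argument also makes transparent why equality is attained precisely in the bipyramid case flagged by the previous proposition, where $f_0(P)=d+2$ and $\pi(R,P)$ is the boundary of a $(d-1)$-simplex for every region with $\alpha(R)>0$.
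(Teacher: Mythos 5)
Your argument is correct, but it takes a genuinely different route from the paper. The paper deduces the lemma in two lines from results it has already established: flawlessness gives $\gInt_1(P) \leq \gInt_{d-1}(P)$, and combining this with $\alpha$-symmetry and the Dehn--Sommerville relation $\gInt_1(P) + \gInt_{d-1}(-P) = h_1(P)$ yields $2\gInt_{d-1}(P) \geq \gInt_1(P) + \gInt_{d-1}(P) = h_1(P) \geq 2$ for non-simplices. You instead compute $2\gInt_{d-1}(P)$ directly: Lemma~\ref{lem:h_i_to_h_d-i} converts $h_{d-1}(D(R,P))$ and $h_{d-1}(B(R,P))$ into the vertex counts $f_0(\bar{D}(R,P))-d$ and $f_0(\bar{B}(R,P))-d$, $\alpha$-symmetry lets you average the two weighted sums, and the inclusion--exclusion $f_0(\bar{D})+f_0(\bar{B}) = f_0(\partial P)+f_0(\pi(R,P))$ gives the identity
\[
    2\gInt_{d-1}(P) \ = \ f_0(P) - 2d + \sum_{R\in\Reg(P)} \alpha(R)\, f_0(\pi(R,P))\,,
\]
from which the bound follows via $f_0(P)\geq d+2$ and $f_0(\pi(R,P))\geq d$. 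What the paper's route buys is brevity, but at the cost of invoking flawlessness, which rests on the Lefschetz-element machinery of Section~\ref{sec:stanley_reisner}; your route is more elementary (only the shelling-based Lemma~\ref{lem:h_i_to_h_d-i} and counting), it makes the role of non-negativity of $\alpha$ explicit in the final estimate, and the exact formula above transparently characterizes the equality case ($f_0(P)=d+2$ and $\pi(R,P)\cong\partial\triangle_{d-1}$ whenever $\alpha(R)>0$), matching the bipyramid analysis of the preceding proposition. Both proofs use $\alpha$-symmetry in the same essential way, namely to trade $D(R,P)$ for $B(R,P)$ in the weighted sum.
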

\begin{proof}
    Suppose that $P$ is not a simplex, then, $h_1(P) \geq 2$ and
    \[
        2\gInt_{d-1}(P) \ \geq \ \gInt_1(-P) + \gInt_{d-1}(P) \ = \
        \gInt_1(P) + \gInt_{d-1}(P) \ = \ h_1(P) \ \geq \ 2\,.\qedhere
    \]
\end{proof}

As a corollary, we can see that low dimensional, $\alpha$-symmetric
polytopes are unimodal:
\begin{prop}
    All simplicial and $\alpha$-symmetric $d$-polytopes, $d \leq 5$, have an unimodal
    $\gInt$-vector.
\end{prop}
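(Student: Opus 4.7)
The plan is to combine the previous lemma---which, in the non-simplex case, yields $\gInt_{d-1}(P)\geq\gInt_d(P)=1$---with the first-half monotonicity from Theorem~\ref{thm:main}(3), namely $\gInt_0(P)\leq\gInt_1(P)\leq\cdots\leq\gInt_m(P)$ for $m=\lceil d/2\rceil$. The crucial numerical observation is that for $d\leq 5$ one has $m\geq d-2$, so at most three indices ($m$, $d-1$, $d$) lie in the ``second half'' and only one comparison (namely $\gInt_{d-2}$ versus $\gInt_{d-1}$) is not immediately settled by the results already established.

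First I would dispose of the trivial cases. When $d\leq 3$, one checks that $m\geq d-1$, so Theorem~\ref{thm:main}(3) already forces $\gInt_0\leq\cdots\leq\gInt_{d-1}$, and unimodality follows regardless of the value of $\gInt_d$. When $P$ is a simplex, Lemma~\ref{lem:simplex} directly produces a non-decreasing, hence unimodal, $\gInt$-vector. (This reduction is also where the hypothesis of the previous lemma is accommodated, by treating its two outcomes separately.)

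This leaves $d\in\{4,5\}$ with $P$ not a simplex, so $m=d-2$. Theorem~\ref{thm:main}(3) supplies $\gInt_0\leq\cdots\leq\gInt_{d-2}$ and the previous lemma supplies $\gInt_{d-1}\geq\gInt_d=1$. I would close the argument by a single case split on whether $\gInt_{d-2}(P)\leq\gInt_{d-1}(P)$: in the affirmative case the sequence is non-decreasing through $\gInt_{d-1}$ and then drops to $\gInt_d$, locating the peak at $d-1$; in the negative case the sequence is non-decreasing through $\gInt_{d-2}$ and then weakly decreases via $\gInt_{d-1}\geq\gInt_d$, placing the peak at $d-2$. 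Either way $\gInt(P)$ is unimodal.

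Since all the analytic content has already been absorbed into the preceding lemma and into Theorem~\ref{thm:main}(3), I do not expect any real obstacle here; the proof reduces to an index count and a two-outcome case distinction. The only delicate point is ensuring that the simplex versus non-simplex dichotomy is threaded through cleanly, which is why I would dispatch the simplex case at the outset before invoking the $\gInt_{d-1}\geq 1$ inequality.
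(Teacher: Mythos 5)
Your proposal is correct and follows essentially the same route as the paper: dispose of the simplex case via Lemma~\ref{lem:simplex}, then for $d=4,5$ combine the first-half monotonicity up to $\lceil d/2\rceil = d-2$ with the preceding lemma's inequality $\gInt_{d-1}(P)\geq\gInt_d(P)=1$, and conclude by the two-way case split on $\gInt_{d-2}$ versus $\gInt_{d-1}$. The only cosmetic difference is that you spell out the $d\leq 3$ case, which the paper covers by its earlier proposition.
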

\begin{proof}
    If $P$ is a simplex, then we are done by
    Lemma~\ref{lem:simplex}. Otherwise, for $d = 4, 5$, we know that
    $\gInt(P)$ is increasing until $\lceil \frac{d}{2} \rceil = d-2$
    and that $\gInt_{d-1}(P) \geq \gInt_d(P) = 1$. Thus either
    $\gInt_{d-2}(P) \geq \gInt_{d-1}(P)$ or
    $\gInt_{d-2}(P) < \gInt_{d-1}(P)$, but both possibilities give an
    unimodal $\gInt$-vector.
\end{proof}

Giving an counter example of an $\alpha$-symmetric simplicial polytope
with non-unimodal $\gInt$-vector is not as easy as in the
non-$\alpha$-symmetric case. The following example was found by
brute-force using a SAGE \cite{SAGE} script to generate random polytopes and
projections:

\begin{example}
    Let $P = \conv(V) \subseteq \R^6$ for $V$ being given as the
    following nine vertices.
    \[
        \arraycolsep=2pt
        \begin{array}{ *{21}{r}l }
          (&-48&  -8&  \phantom{-}16& -10&   \phantom{-0}6& -12&) \qquad (&-23&  -2&  -4&  6&   \phantom{-0}2&   \phantom{-0}8&) \qquad (&-20&  -8&   \phantom{-0}2&  \;-8&   \phantom{-0}9&  \phantom{-0}5&)\\
 (&-12&  -6&  -8&   4&   2&   2&) \qquad  (&  3&   \phantom{-0}0& \phantom{-0}2&  -2&   3&   1&) \qquad (& 22& -12&   2&  -9&   6&  -3&)\\
 (& 36&   0&  -8&   1&  -1&  -7&) \qquad  (& 50&  -9&   4& -10&   6&  -4&) \qquad (& 56&   6&   4&  -2&   0&  -2&)
        \end{array}
    \]
    Projecting $P$ orthogonally along the standard basis vector $e_1$ gives
    $Q \subseteq \R^5$. Then $\partial Q \cong \pi(R, P)$ for
    $R \in \Reg(P)$ with $e_1 \in \interior R$. Set
    $\alpha \defeq \frac{1}{2}(\omega_{e_1} + \omega_{-{e_1}})$ and
    $\gInt(P) = \gInt(\alpha, P)$. Note that $\alpha$ is even, thus
    $P$ is $\alpha$-symmetric. SAGE tells us that
    \begin{align*}
      h(\partial P) \ &= \ (1, 3, 4, 5, 4, 3, 1)\,,\\
      g(\partial Q) \ &= \ (1, 3, 0, 0, 0,-3,-1)
    \end{align*}
    and therefore
    \begin{align*}
        2\gInt(P) \ &= \ h(\partial P) - (\alpha(R) + \alpha(-R)) g(\pi(R, P))\\
                   \ &= \ (1, 3, 4, 5, 4, 3, 1) - (1, 3, 0, 0, 0, -3, -1) = (0, 0, 4, 5, 4, 6, 2)\,,
    \end{align*}
    which is not unimodal. If we flatten $P$ in direction $e_1$ with
    the following linear transformation for small enough
    $\eps > 0$
    \[
        C_\eps \ \defeq \ \begin{pmatrix}
            \eps&0&0&0&0&0\\
            0&1&0&0&0&0\\
            0&0&1&0&0&0\\
            0&0&0&1&0&0\\
            0&0&0&0&1&0\\
            0&0&0&0&0&1\\
        \end{pmatrix}\,,
    \]
    we see that all angles $\nu(F, P)$ for $F \in D(R, P)$ or
    $F \in B(R, P)$ tend to $\frac{1}{2}$, while those for
    $F \in \pi(R, P)$ tend to $0$. Thus we have
    \[
        \lim_{\eps \to 0} \gInt(\nu, C_\eps P) \ = \ (0, 0, 4, 5, 4, 6, 2)\,,
    \]
    so there are non-unimodal $\gInt$-vectors even for the standard
    cone angle.
\end{example}

While it is plausible to be false, it seems hard to find a
counterexample of a simplicial $d$-polytope with non-unimodal
$\gInt$-vector for an even cone angle and $d \geq 7$.

\bibliographystyle{siam} \bibliography{bibliography.bib}

\end{document}